\numberwithin{equation}{section}
\newtheorem{theorem}{Theorem}[section]
\newtheorem{proposition}[theorem]{Proposition}
\newtheorem{lemma}[theorem]{Lemma}
\newtheorem{corollary}[theorem]{Corollary}
\newtheorem{conjecture}[theorem]{Conjecture}
\theoremstyle{remark}
\newtheorem{remark}{Remark}[section]
\theoremstyle{definition}
\begin{document}

\title[Framed deformation of Galois representation]
{Framed deformation of Galois representation}

\author{Lin Chen}
\address{Department of Mathematics\\
University of California,Los Angeles}
\email{chenlin@math.ucla.edu}

\begin{abstract}
We studied framed deformations of two dimensional Galois
representation of which the residue representation restrict to
decomposition groups are scalars, and established a modular
lifting theorem for certain cases. We then proved a family version
of the result, and used it to determine the structure of
deformation rings over characteristic zero fields. As a corollary,
we obtain the $\mathcal{L}$-invariant of adjoint square
representation associated to a Hilbert Hecke eigenform.
\end{abstract}

\maketitle

\setcounter{tocdepth}{5} \setcounter{page}{1}

\section{Introduction}
Given a Hilbert modular Hecke eigenform $f$ (over some totally
real field $F$), one can associated a two dimensional continuous
Galois representation $\rho_f$ (into $GL_2(K)$ for some finite
extension $K$ over $\mathbb{Q}_p$). When $f$ is ordinary, $\rho_f$
is nearly ordinary at $p$. Conversely, assume the residue
representation of a given representation $\rho$ is modular and
nearly ordinary, and assume it satisfies a technical
distinguishedness condition, the representation can be proved to
be modular by Taylor-Wiles \cite{TW}, Fujiwara \cite{Fuj} and
Skinner-Wiles \cite{SW}. The main idea is to study certain types
of deformation problems, and identify the universal deformation
ring to the localization of the Hecke algebra at the maximal ideal
determined by $f$. This type of result is often called ``$R=T$''
theorem.

When the Distinguishedness condition of a nearly ordinary
representation fails, the deformation functor with the prescribed
local conditions is no longer representable. This phenomena also
appeared in the deformation of Barsotti-Tate representations
studied by Kisin \cite{Kis}. To make a representable functor,
instead consider the deformations with prescribed local
conditions, Kisin consider the functor associated to each ring a
deformation of Barsotti-Tate representation, together with a basis
lifting a fixed chosen basis of the residue representation. The
basis eliminate automorphisms of the functor, and thus form a
representable functor. As a result, Kisin proved certain framed
version of the ``$R=T$'' theorem, up to some finite torsion due to
the Barsotti-Tate condition, which is enough to prove the
modularity.

In this paper, we will consider the local condition that the
restriction of $\overline{\rho}$ to the decomposition groups are
scalers, so the distinguishedness condition fails. Inspired by
Kisin's work, we invent a deformation ring
$\mathcal{R}_\mathbf{p}^{\vartriangle,\psi,s}$, which represent
the functor associated each ring $A$ a deformation of the
representation of a decomposition group into the Borel subgroup
(upper triangular matrices) together with a basis, which is
transformed to the standard basis by an element of the Borel
subgroup. The reason we make such choice is the Schlessinger
criterion, which ensure the functor is representable. Using these
rings, we proved a framed version of the ``$R=T$'' theorem in
section \ref{sec5} and the modularity assuming the residue
representation is modular in Theorem \ref{Thm6.1}. We then
generalize this result to Hida's family of Hilbert modular forms.
As an application, we prove Hida's conjecture that $\mathcal{R}_K$
is a power series ring, where $\mathcal{R}_K$ is the universal
deformation ring representing the functor of deformations into
representations over Artinian $K$-algebras. This result implies
Hida's conjectural formula \cite{H07d} of the
$\mathcal{L}$-invariant of the adjoint representation, in
Corollary \ref{Cor8.4}.

In section \ref{sec2}, we provide some well known facts on Hilbert
modular forms. In section \ref{sec3}, we study the framed
deformation (without local conditions) of Kisin \cite{Kis}, and
calculate their tangential dimensions using Galois cohomology. The
ring $\mathcal{R}_\mathbf{p}^{\vartriangle,\psi,s}$ and the
deformations of the scaler representation of the decomposition
group into Borel subgroups are studied in section \ref{sec4},
which is the most original part of the paper. Section \ref{sec5}
and \ref{sec6} are the Taylor-Wiles system and the modularity. We
generalize these result to Hida's family in section \ref{sec7} and
obtained the $\mathcal{L}$-invariant in section \ref{sec8}.

Finally, I want to express my grateful to Professor Hida for all
his patience, time and care for the past four years. Without his
educations, suggestions, comments and encouragements, this work
can not be finished. I also want to thank Professor Qingchun Tian
of Peking University for the conversation in the summer of 2007.

\section{Hilbert Modular Forms} \label{sec2}
Fix a totally real number field $F$ of degree $d$. $I$ is the set
of embedding of $\sigma:F\hookrightarrow \overline{\mathbb{Q}}$.
Denote by $\mathbb{A}_{F}$ its ring of adeles, which decompose
into finite and infinite parts as
$\mathbb{A}_{F}=\mathbb{A}_{F}^f\times\mathbb{A}_{F}^{\infty}$.
Let $\overline{\mathbb{Q}}_p$ be the algebraic closure of
$\mathbb{Q}_p$ and $E\subset\overline{\mathbb{Q}}_p$ a finite
extension of $\mathbb{Q}_p$ with integer ring $O_E$. Fix the
embedding
$\mathbb{C}\hookleftarrow\overline{\mathbb{Q}}\hookrightarrow\overline{\mathbb{Q}}_p$
once and for all.

As Hida in \cite{HMF}, we consider the following type of
continuous ``Neben'' characters
$$\varepsilon=(\varepsilon_1,\varepsilon_2:\widehat{\mathcal{O}}^{\times}
\rightarrow\mathbb{C}^{\times},
\varepsilon_+:\mathbb{A}_F/{F}^{\times}\rightarrow
\mathbb{C}^{\times})$$ and the weights
$k=(k_1,k_2)\in\mathbb{Z}[I]^2$ such that $k_1+k_2=(n+1)\cdot I$
for some integer $n$,
$\varepsilon_+|_{\widehat{\mathcal{O}}^{\times}}=\varepsilon_1\varepsilon_2$
and $\varepsilon_+(x_{\infty})=x^{-(k_1+k_2)+I}$. Let $N$ be an
integral ideal of $\mathcal{O}$ and define the $\Gamma_0$ type
congruence subgroup
$$\widehat{\Gamma}_0(N)=\{\begin{pmatrix}a&b\\c&d\end{pmatrix}\in
GL_2(\widehat{\mathcal{O}})|c\in N\widehat{\mathcal{O}}\}$$ and
the $\Gamma_1$ type congruence subgroup
$$\widehat{\Gamma}_1^1(N)=\{\begin{pmatrix}a&b\\c&d\end{pmatrix}\in
GL_2(\widehat{\mathcal{O}})|a-1,b,d-1\in
N\widehat{\mathcal{O}}\}.$$ Let
$\varepsilon^-=\varepsilon_2^{-1}\varepsilon_1$ and assume its
conductor $\mathfrak{c}(\varepsilon^-)\supset N$, then the
character
$$\varepsilon:\widehat{\Gamma}_0(N)\rightarrow \mathbb{C}^{\times}$$
defined by
$\varepsilon\begin{pmatrix}a&b\\c&d\end{pmatrix}=\varepsilon_2(ad-bc)\varepsilon^{-}(a)$
is a continuous character of $\widehat{\Gamma}_0(N)$.

Under the above notation, define the automorphy factor of weight
$k$ as
$$J_k(g,z)=\det(g)^{k_1-I}j(g,z)^{k_2-k_1+I}=\prod_{\sigma\in I}
\det(g_\sigma)^{k_{1,\sigma}}(c_{\sigma}
z_{\sigma}+d_{\sigma})^{k_{2,\sigma}-k_{2,\sigma}+1}$$ for
$g=(g_\sigma)\in GL_2(\mathbb{A}_{F}^{\infty})=GL_2(\mathbb{R})^I$
and $z=(z_\sigma)\in \mathbb{H}^I$, where $\mathbb{H}^I$ is the
$d$ folder upper half plain as usual. Define the Hilbert cusp form
$S_k(N,\varepsilon;\mathbb{C})$ of weight $k$, level $N$ and
``Neben'' type $\varepsilon$ to be the functions $f$ with the
following three conditions:

(A1)\hskip 0.5cm For all $\alpha\in GL_2(F)$, $z\in
Z(\mathbb{A_F})$ and $u\in\widehat{\Gamma}_0(N)C_{\mathbf{i}}$,
where $C_{\mathbf{i}}$ is the stabilizer of
$\mathbf{i}=(\sqrt{-1},\sqrt{-1},\cdots,\sqrt{-1})\in \mathbb{H}^I
$ in $GL_2^+(\mathbb{A}_F^{\infty})$, $f(\alpha
xuz)=\varepsilon_+(z)\varepsilon(u_f)f(x)J_k(u_{\infty},\mathbf{i})^{-1}$.

(A2)\hskip 0.5cm For each $z\in \mathbb{H}^I$, choose an element
$u\in GL_2(\mathbb{A}_F^{\infty})$, then the functions
$f_g:\mathbb{H}^I\rightarrow \mathbb{C}$ defined by
$f_g(z)=f(gu_{\infty})J_k(u_{\infty},\mathbf{i})$ are holomorphic
for all $g\in GL_2(\mathbb{A}_{F}^f)$.

(A3)\hskip 0.5cm
$\int_{\mathbb{A}_{F}/F}f(\begin{pmatrix}1&u\\0&1\end{pmatrix}x)du=0$
for all $x\in GL_2(\mathbb{A}_{F}^f)$.

As usual, one can define the Hecke operators $T_v$ and $S_v$ for
$v\nmid N$ and $U_\ell$ for $\ell|N$ acting on the finite
dimensional complex vector space $S_k(N,\varepsilon;\mathbb{C})$.
Let $\mathcal{W}$ be the integer ring of some number field
containing the values of $\varepsilon_1$,$\varepsilon_2$,
$\varepsilon_+$ and all the conjugates of $\mathcal{O}$ in
$\overline{\mathbb{Q}}$. It's well known that there is a
$\mathcal{W}$ lattice $S_k(N,\varepsilon;\mathcal{W})\subset
S_k(N,\varepsilon;\mathbb{C})$ stable under the action of the
Hecke operators mentioned above such that
$$S_k(N,\varepsilon;\mathcal{W})\otimes_{\mathcal{W}}\mathbb{C}
=S_k(N,\varepsilon;\mathbb{C}).$$

For each $\mathcal{W}$-algebra $A$, define
$S_k(N,\varepsilon;A)=S_k(N,\varepsilon;\mathcal{W})\otimes_{\mathcal{W}}A$.
Define $h_k(N,\varepsilon;A)$ to be the $A$-subalgebra of
$\textrm{End}_A(S_k(N,\varepsilon;A))$ generated by the operators
$T_v$ for $v\nmid N$ and $U_\ell|N$. Then there are isomorphisms
$$\textrm{Hom}_A(S_k(N,\varepsilon;A),A)\cong
h_k(N,\varepsilon;A)$$ and
$$\textrm{Hom}_A(h_k(N,\varepsilon;A),A)\cong
S_k(N,\varepsilon;A)$$ given by the perfect pairing
$(,):h_k(N,\varepsilon;A)\times S_k(N,\varepsilon;A)\rightarrow A$
such that $(h,f)=a(1,f|h)$ for $a(1,f)$ the first coefficient in
the $q$-expansion of $f$.

\vskip 0.5cm

Let $W$ be the completion of $\mathcal{W}$ at a prime above $p$,
so $W$ is a complete discrete valuation ring with residue field
$k$ of characteristic $p$. The following theorem is well known and
should be attributed to many people including Shimura, Deligne,
Serre, Wiles, Blasius, Rogawski and Taylor. The version we present
here is partially adopted from Hida's book \cite{HMF}.

\begin{theorem} \label{ThmHida}
Suppose $k_2-k_1+I\geqslant 2I$. Let $P$ be a prime ideal of
$h=h_k(N,\varepsilon;W)$ and assume that the characteristic of the
fraction field of $h/P$ is different from $2$. Then, there is a
continuous semisimple Galois representation $\rho_P:G_F\rightarrow
GL_2(h/P)$ unramified outside $pN$ such that

(1). $\mathrm{tr}(\rho_P)(Frob_\ell)=T_\ell$ for all prime ideals
$\ell\nmid pN$ and
$\mathrm{det}(\rho_P)=\varepsilon_+\mathcal{N}^n$ for the $p$-adic
cyclotomic character $\mathcal{N}$.

(2). Let $m$ be the unique maximal ideal containing $P$ and assume
that $T_{\mathbf{p}}\notin m$ for all primes $\mathbf{p}|p$, then
we have
$\rho_P|_{D_\mathbf{p}}\cong\begin{pmatrix}\epsilon_\mathbf{p}
&*\\0&\delta_\mathbf{p}\end{pmatrix}$ for the restriction of the
representation $\rho_P$ at the decomposition group $D_{\rho_P}$.
Moreover,
$\delta_\mathbf{p}([\varpi_\mathbf{p},F_\mathbf{p}])=U_\mathbf{p}(\varpi_\mathbf{p})$
and
$\delta_\mathbf{p}([u,F_\mathbf{p}])=\varepsilon_{1,\mathbf{p}}(u)u^{-k_{1,\mathbf{p}}}$
for $u\in O_\mathbf{p}^{\times}$.

(3). Write $N=N_0\mathfrak{c}(\varepsilon^-)$ and suppose that
$N_0$ is square free and prime to $\mathfrak{c}$. If $\ell$ is a
prime factor of $N_0$ which prime to $p$ and $\ell^2\nmid N$, then
$\rho_P|D_{\ell}\cong\begin{pmatrix}\epsilon_{\ell}&*\\0&\delta_{\ell}\end{pmatrix}$
such that $\delta_{\ell}([\varpi_\ell,F_\ell])=U_\ell$ and
$\delta_{\ell}([u,F_\ell])=\varepsilon_{1,\ell}(u)$ for $u\in
\mathcal{O}_\ell^{\times}$.
\end{theorem}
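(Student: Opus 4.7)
The plan is to first attach a Galois representation at the minimal primes of $h$ (where the specialization corresponds to a classical Hilbert eigenform), and then bootstrap to general $P$ by gluing traces via Chebotarev. For a minimal prime $P_0$, the domain $h/P_0$ has fraction field a finite extension of $\Q_p$ corresponding to a Hecke eigenform $f$, and the existence of $\rho_f$ satisfying (1) is the theorem of Shimura--Deligne--Carayol--Blasius--Rogawski--Taylor: one constructs $\rho_f$ via the $\ell$-adic \'etale cohomology of a quaternionic Shimura variety (working with an inner form split at one archimedean place when $[F:\Q]$ is odd, and using a Jacquet--Langlands transfer to a Shimura curve when $[F:\Q]$ is even), and identifies $\mathrm{tr}(\rho_f)(\mathrm{Frob}_\ell)$ with $T_\ell$ via the Eichler--Shimura--Ihara congruence relation. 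The determinant formula then falls out of the compatibility with the central character through class field theory.

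For a general prime $P$, I would exploit the injection $h/P \hookrightarrow \prod_i h/P_i$ where $P_i$ ranges over the minimal primes contained in $P$. The trace function $g \mapsto \sum_i \mathrm{tr}(\rho_{P_i})(g)$ on $G_F$ takes values in $h/P$ on Frobenius elements (since these are the $T_\ell$) and hence everywhere by Chebotarev density. A Rouquier/Chenevier pseudo-representation argument, using that the residue characteristic of $\mathrm{Frac}(h/P)$ is different from $2$, then produces an honest semisimple $\rho_P: G_F \to GL_2(h/P)$ with this trace and the prescribed determinant. Unramifiedness outside $pN$ is immediate since each $\rho_{P_i}$ is unramified outside $pN$.

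For (2), under the nearly-ordinary hypothesis $T_\mathbf{p} \notin m$, Hida's theory shows that at every classical specialization $f$ of $h$, the restriction $\rho_f|_{D_\mathbf{p}}$ is upper-triangularizable with the unramified quotient character $\delta_\mathbf{p}$ sending $\mathrm{Frob}_\mathbf{p}$ to the $U_\mathbf{p}$-eigenvalue and inertia acting on the line via $u \mapsto \varepsilon_{1,\mathbf{p}}(u) u^{-k_{1,\mathbf{p}}}$ (by local-global compatibility at $p$ for ordinary forms, due to Wiles and Hida). To descend to $h/P$ itself, I would choose a $G_F$-stable lattice and use that the two diagonal characters have distinct reductions modulo $m$, which is forced by $\delta_\mathbf{p}(\mathrm{Frob}_\mathbf{p}) = U_\mathbf{p} \notin m$ compared to $\epsilon_\mathbf{p}(\mathrm{Frob}_\mathbf{p}) = \varepsilon_+\mathcal{N}^n(\mathrm{Frob}_\mathbf{p})/U_\mathbf{p}$, which may be a $p$-adic unit only in controlled cases; the canonical quotient line then exists over $h/P$ and produces the filtration. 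Part (3) follows by the same strategy using Carayol's local--global compatibility at $\ell \mid N_0$: under the square-free assumption, the local automorphic representation is a twist of Steinberg and the local Galois representation is correspondingly an extension with unramified quotient $\delta_\ell$ sending $\mathrm{Frob}_\ell$ to $U_\ell$.

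The main obstacle I anticipate is step (2): propagating the upper-triangular filtration from the classical specializations of $h$ to the full quotient $h/P$. This requires both the lattice argument (where distinctness of the diagonal characters mod $m$ is essential) and the full Hida-theoretic interpolation of the unramified quotient character across the family. The parallel issue for (3) at primes $\ell \nmid p$ is slightly easier because one works with Weil--Deligne representations and the monodromy filtration is canonical once one knows the local representation is Steinberg up to twist.
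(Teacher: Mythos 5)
The paper does not prove this theorem: it is quoted as a known result (attributed to Shimura, Deligne, Carayol, Blasius--Rogawski, Taylor, Wiles and stated following \cite{HMF}), so the comparison here is between your sketch and the standard proofs in the literature. Your overall strategy --- \'etale cohomology of quaternionic Shimura varieties plus Jacquet--Langlands for the classical specializations, pseudo-representations and Chebotarev to glue over a general prime $P$, and local--global compatibility for (2) and (3) --- is indeed the standard route. One step is merely misstated: for the minimal primes $P_i\subseteq P$ one has surjections $h/P_i\twoheadrightarrow h/P$, \emph{not} an injection $h/P\hookrightarrow\prod_i h/P_i$; the correct device is the injection of the reduced algebra $h^{\mathrm{red}}$ into the product over \emph{all} minimal primes, into which one places the tuple (not the sum) $(\mathrm{tr}\,\rho_{P_i}(g))_i$. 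Chebotarev and the closedness of the image of $h$ show this pseudo-representation is $h^{\mathrm{red}}$-valued; one then pushes forward to $h/P$ and applies the Wiles/Taylor/Rouquier--Nyssen dictionary over $\mathrm{Frac}(h/P)$, which is where the hypothesis on the characteristic being different from $2$ enters. (Note also that this only yields $\rho_P$ over a field; landing in $GL_2(h/P)$ itself requires a further lattice argument, a point the statement glosses over.)

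The genuine gap is in your treatment of (2). You propose to descend the $D_{\mathbf{p}}$-filtration from classical points to $h/P$ via a stable lattice, using that the two diagonal characters have \emph{distinct reductions modulo $m$}, forced by $\delta_{\mathbf{p}}(\mathrm{Frob}_{\mathbf{p}})=U_{\mathbf{p}}\notin m$ versus $\epsilon_{\mathbf{p}}(\mathrm{Frob}_{\mathbf{p}})=\det\rho(\mathrm{Frob}_{\mathbf{p}})/U_{\mathbf{p}}$. That forcing only works when $\det\rho(\mathrm{Frob}_{\mathbf{p}})$ is a non-unit, i.e.\ $n\geqslant 1$. At the boundary weight $k_2-k_1+I=2I$ (so $n=0$, which is exactly the weight $k_0=(0,I)$ used throughout Sections~\ref{sec7}--\ref{sec8}) both characters are unit-valued, and the whole point of this paper is the non-distinguished case $\overline{\epsilon}_{\mathbf{p}}=\overline{\delta}_{\mathbf{p}}$ with $\overline{\rho}|_{D_{\mathbf{p}}}$ scalar; your lattice argument is therefore inapplicable precisely where the theorem is invoked. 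The standard proof of (2) (Wiles \cite{W88}, Hida \cite{HMF}) does not use residual distinctness: one embeds $h/P$ into the nearly ordinary Hida family, where at a Zariski-dense set of arithmetic points of regular weight the two Frobenius eigenvalues have different valuations, constructs the rank-one unramified-up-to-$\varepsilon_{1,\mathbf{p}}$ quotient as the unit-root subspace (via the ordinary projector $\lim_n\rho(\phi)^{n!}U_{\mathbf{p}}^{-n!}$, or equivalently a density argument on the trace and determinant identities characterizing reducibility on $D_{\mathbf{p}}$), and then specializes at $P$. Part (3) has the analogous integral caveat: Carayol's local--global compatibility gives the Steinberg-type filtration over the fraction field, and one must still argue it persists over $h/P$.
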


Here in the statement (2) of the above theorem, the symbol
$u^{-k_{1,\textbf{p}}}$ denote the product $\prod_{\sigma\in
I_\textbf{p}}\sigma(u)^{-k_{1,\sigma}}$ where $I_\textbf{p}$ is
the subset of $I$ consisting of the embedding $\sigma:F\rightarrow
\overline{\mathbb{Q}}\rightarrow\overline{\mathbb{Q}}_p$ which
give rise to the completion $F_\textbf{p}$. Since we choose
$W[\frac{1}{p}]$ contains all the image of the embedding of
$\mathcal{O}_F$, we may regard the above embedding actually into
$W[\frac{1}{p}]$.

\vskip 0.5cm

Let $m$ be a maximal ideal of $h$, then the localization $h_m$ is
a direct summand of $h$. The tensor product
$S_k(N,\varepsilon;A)_m=S_k(N,\varepsilon;A)\otimes_hh_m$ for a
$W$-algebra $A$ is then a direct summand of $S_k(N,\varepsilon;A)$
and can be identified with $\textrm{Hom}_A(h_m\otimes_WA,A)$.

If the representation $\overline{\rho}=\rho_m$ constructed as
above with the extra property that
$\overline{\varepsilon}_\ell\neq\overline{\delta}_\ell$, we call
this condition (ds) at $\ell$. From now on, we assume the square
free hypothesis in (3) of the above theorem.

Let $x$ be a prime ideal of $F$ such that $x\nmid pN$ and write
$\varpi_x$ the uniformizer of $x$. Consider the map
$$i_x:S_k(N,\varepsilon;A)^2\rightarrow S_k(Nx,\varepsilon;A)$$
sends $(f_1,f_2)\in S_k(N,\varepsilon;A)^2$ to $f_1+f_2|[\eta_x]$
for $\eta_x=\begin{pmatrix}1&0\\0&\varpi_x\end{pmatrix}$. We
denote $S_k(Nx,\varepsilon;A)_m^{old}$ the image of
$S_k(N,\varepsilon;A)_m^2$ under the map $i_x$ and denote
$h_m^{old}\subset\textrm{End}_A(S_k(Nx,\varepsilon;A)_m^{old})$
generated by all the Hecke operators.

\begin{lemma}
Define a linear operator $U$ acting on $S_k(N,\varepsilon;A)^2$ by
the matrix
$\begin{pmatrix}T_{x}&-1\\\varepsilon_+(\varpi_x)N(x)^n&0\end{pmatrix}$
multiplication on the right of $(f_1,f_2)\in
S_k(N,\varepsilon;A)^2$. Then we have $i_x\circ U=U_x\circ i_x$.
Furthermore, if the residue representation
$\overline{\rho}:G_F\rightarrow GL_2(h/m)$ associate to the
maximal ideal $m$ of the Hecke algebra $h=h_k(N,\varepsilon;W)$
restrict to the decomposition group $D_x$ satisfies the (ds)
condition, then there is an isomorphism
$$h_m^{old}\cong h_m[X]/(X^2-T_xX+\varepsilon_+(\varpi_x)N(x)^n)\cong h_m^2.$$
\end{lemma}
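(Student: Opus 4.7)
My plan is to handle the two claims separately. For the identity $i_x \circ U = U_x \circ i_x$, I would proceed by direct computation on $(f_1,f_2) \in S_k(N,\varepsilon;A)^2$. This reduces to two standard identities for any $f \in S_k(N,\varepsilon;A)$ (with $x \nmid N$), namely
\[
U_x f \;=\; T_x f - f|[\eta_x], \qquad U_x\bigl(f|[\eta_x]\bigr) \;=\; \varepsilon_+(\varpi_x)N(x)^n\,f,
\]
obtained by comparing the double coset decompositions of $\widehat\Gamma_0(N)\eta_x\widehat\Gamma_0(N)$ (which has one extra coset representative, namely $\eta_x$ itself) and $\widehat\Gamma_0(Nx)\eta_x\widehat\Gamma_0(Nx)$, together with the value of the Neben character on the central element $\varpi_x I$. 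Substituting these into $U_x(f_1 + f_2|[\eta_x])$ produces $T_x f_1 - f_1|[\eta_x] + \varepsilon_+(\varpi_x)N(x)^n f_2$, which agrees with $i_x((f_1,f_2)U)$ by inspection.

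For the ring-theoretic statement, I would use the first part to describe $h_m^{old}$ concretely. Via $i_x$, the Hecke operators $T_v$, $S_v$ for $v \nmid Nx$ and $U_\ell$ for $\ell \mid N$ act on $i_x(S_k(N,\varepsilon;A)_m^2)$ by the same operator on each factor, and hence generate only the diagonal copy of $h_m$ inside $h_m^{old}$. The remaining operator $U_x$ corresponds to the matrix $U$ by part one; since $\det(XI - U) = X^2 - T_x X + \varepsilon_+(\varpi_x)N(x)^n$, this produces a surjection
\[
\phi\colon h_m[X]\big/\bigl(X^2 - T_x X + \varepsilon_+(\varpi_x)N(x)^n\bigr) \twoheadrightarrow h_m^{old}.
\]

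Finally, invoking (ds), I would split the polynomial. Its reduction modulo $m$ is the characteristic polynomial of $\overline\rho(\mathrm{Frob}_x)$, whose eigenvalues $\overline\epsilon_x(\mathrm{Frob}_x)$ and $\overline\delta_x(\mathrm{Frob}_x)$ are distinct by (ds). Hensel's lemma lifts this separable factorization to $h_m[X]$, and the Chinese remainder theorem yields $h_m[X]/(X^2 - T_xX + \varepsilon_+(\varpi_x)N(x)^n) \cong h_m \times h_m$, a free $h_m$-module of rank two. To upgrade $\phi$ to an isomorphism I would compare $A$-ranks: under (ds) the map $i_x$ is injective after localizing at $m$ (otherwise one of the two idempotents of $h_m\times h_m$ would annihilate the entire $m$-component of the old subspace, contradicting the realization of both eigenspaces of $U_x$ by genuine oldforms), so $S_k(Nx,\varepsilon;A)_m^{old} \cong S_k(N,\varepsilon;A)_m^2$, and by the perfect Hecke pairing $h_m^{old}$ shares the $A$-rank $2\cdot\mathrm{rk}_A h_m$ of the source of $\phi$. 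The main obstacle I anticipate is precisely this injectivity of $i_x$ at $m$, which is where (ds) enters essentially; the double coset identities in the first step are routine once one carefully tracks the automorphy factor $J_k$ and the Neben character on $\varpi_x I$.
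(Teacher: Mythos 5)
Your proposal follows essentially the same route as the paper: the same two double-coset identities $U_x f = T_x f - f|[\eta_x]$ and $U_x(f|[\eta_x]) = \varepsilon_+(\varpi_x)N(x)^n f$, the same matrix computation giving $i_x\circ U = U_x\circ i_x$, and the same use of (ds) plus Hensel's lemma to split the quadratic and obtain $h_m^{old}\cong h_m^2$. The only difference is bookkeeping: where the paper asserts $h_m^{old}=h_m[U_x]\cong h_m[Y]/(Y^2-T_xY+\varepsilon_+(\varpi_x)N(x)^n)$ directly (justifying injectivity of $i_x$ by the invertibility of $\det U$), you exhibit the surjection from the polynomial quotient explicitly and close the rank count via injectivity of $i_x$ at $m$ — the same crux, treated at a comparable level of detail.
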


\begin{proof}
$$f|T_x=[\eta_x](f)+\sum_{a\bmod x}[\begin{pmatrix}1&a\\0&1\end{pmatrix}
\eta_x^{\iota}](f)=[\eta_x](f)+U_x(f).$$
which means that on $S_k(Nx,\varepsilon;A)_m^{Old}$, we have
$T_x=U_x+[\eta_x]$. On the other hand,
$[\eta_x^\iota\eta_x]=[\varpi_x]$, so
$U_x\circ[\eta_x]=\varepsilon_+(\varpi_x)N(x)^n$ on
$S_k(Nx,\varepsilon;A)_m^{Old}$.

\begin{align*}
U_xi_x((f_1,f_2))&=U_x(f_1+[\eta_x]f_2)=T_xf_1-[\eta_x]f_1+\varepsilon_+(\varpi_x)N(x)^nf_2\\
&=i_x((f_1,f_2)\begin{pmatrix}T_x&-1\\
\varepsilon_+(\varpi_x)N(x)^n&0\end{pmatrix})=i_x((f_1,f_2)U)
\end{align*}
for $(f_1,f_2)\in S_k(N,\varepsilon;A)_m^2$. The determinant
$\det(U)=\varepsilon_+(\varpi_x)N(x)^n\in W^{\times}$ is
invertible, which implies that $i_x$ is an isomorphism. $U_x$ is
invertible in $h_m^{old}$ and
$T_x=U_x+\varepsilon_+(\varpi_x)N(x)^nU_x^{-1}\in h_m^{old}$, thus
$h_m^{old}=h_m[U_x]\cong
h_m[Y]/(Y^2-T_xY+\varepsilon_+(\varpi_x)N(x)^n)$. If further
assume $\overline{\rho}$ satisfies (ds) condition at $x$, the
equation $Y^2-T_xY+\varepsilon_+(\varpi_x)N(x)^n=0$ has two
distinct roots modulo $m$, so has two distinct roots by Hensel
lemma, which implies
$h_m[Y]/(Y^2-T_xY+\varepsilon_+(\varpi_x)N(x)^n)\cong h_m^2$ as
algebras.
\end{proof}

Let $Q$ be a finite set of finite primes of $F$ such that
$\overline{\rho}$ satisfies (ds) at $v$ and $N(v)\equiv 1(\bmod
p)$ for all $v\in Q$. Let $NQ$ be the ideal $N\prod_{v\in
Q}(\varpi_v)$ and let $\widehat{\Gamma}_1^1(Q)=\prod_{v\in
Q}\widehat{\Gamma}_1^1(\varpi_v)$. For each $v\in Q$, choose a
solution $\alpha_v$ of the equation
$Y^2-T_vY+\varepsilon_+(\varpi_x)N(x)^n=0$. We thus has a maximal
ideal $m_Q$ of $h_k(NQ,\varepsilon;A)$ in the old part generated
by $\pi$,$T_x-\textrm{tr}\overline{\rho}(Frob_x)$ for $x\nmid pNQ$
and $U_v-\alpha_v$ for $v\in Q$. The above lemma implies
$h_k(NQ,\varepsilon;A)_{m_Q}\cong h_k(N,\varepsilon;A)_m$.

For each $v\in Q$, let $\Delta_v$ be the maximal $p$-power
quotient of $(\mathcal{O}_v/\varpi_v\mathcal{O}_v)^{\times}$, and
let $\Delta=\prod_{v\in Q}\Delta_v$. Define $U_v$ to be the kernel
of the projection
$(\mathcal{O}_v/\varpi_v\mathcal{O}_v)^{\times}\rightarrow\Delta_v$,
i.e, $U_v$ consists of elements in
$(\mathcal{O}_v/\varpi_v\mathcal{O}_v)^{\times}$ with order prime
to $p$. Denote by $S_k(N_Q,\varepsilon;A)$ the forms in
$S_k(\widehat{\Gamma}_0(N)\cap\widehat{\Gamma}_1^1(Q),\varepsilon;A)$
fixed by the action of $\prod_{v\in Q}U_v$. Let
$h_k(N_Q,\varepsilon;A)$ be the elements in
$\mathrm{End}_A(S_k(N_{Q},\varepsilon;A))$ generated by Hecke
operators. $h_k(N_Q,\varepsilon;W)$ is then a $W[\Delta]$-algebra
and $h_k(N_Q,\varepsilon;W)/(\delta-1,\delta\in\Delta)\cong
h_k(NQ,\varepsilon;W)$. Let $\mathfrak{m}_Q$ be the inverse image
of $m_Q$ under the projection $h_k(N_Q,\varepsilon;W)\rightarrow
h_k(NQ,\varepsilon;W)$, so $\mathfrak{m}_Q$ is a maximal ideal of
$h_k(N_Q,\varepsilon;W)$.

\begin{lemma}
The group $\Delta$ acting on
$S_k(N_Q,\varepsilon;A)_{\mathfrak{m}_Q}$ induces an isomorphism
between the invariants of the action, and the space
$S_k(N,\varepsilon;A)_m$. The localization
$h_k(N_Q,\varepsilon;W)_{\mathfrak{m}_Q}$ is a
$W[\Delta]$-algebra, free of finite rank over $W[\Delta]$.
\end{lemma}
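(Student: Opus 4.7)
The plan is in two parts: (1) identify the $\Delta$-invariants with forms at level $NQ$ and then invoke the previous lemma to descend to level $N$; (2) prove freeness of the cusp form module over $W[\Delta]$ via Nakayama plus a rank computation, and then transfer freeness to the Hecke algebra using the perfect Hecke pairing.

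For part (1), I use that $(\mathcal{O}_v/\varpi_v\mathcal{O}_v)^\times$ is cyclic of order $N(v)-1$ with split decomposition $U_v\times\Delta_v$ into prime-to-$p$ and $p$-primary parts. The diamond operators give an action of $\prod_{v\in Q}(\mathcal{O}_v/\varpi_v\mathcal{O}_v)^\times$ on $S_k(\widehat{\Gamma}_0(N)\cap\widehat{\Gamma}_1^1(Q),\varepsilon;A)$. By definition $S_k(N_Q,\varepsilon;A)$ is its $\prod_v U_v$-invariant subspace, so the further $\Delta$-invariants are fixed by the full product, which is exactly $S_k(NQ,\varepsilon;A)$. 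Because $\mathfrak{m}_Q$ is the preimage of $m_Q$, localization yields $S_k(N_Q,\varepsilon;A)_{\mathfrak{m}_Q}^\Delta\cong S_k(NQ,\varepsilon;A)_{m_Q}$, which by iteratively applying the previous lemma at each $v\in Q$ (each satisfying (ds), with $m_Q$ selecting the $\alpha_v$-root) is isomorphic to $S_k(N,\varepsilon;A)_m$.

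For part (2), let $M:=S_k(N_Q,\varepsilon;W)_{\mathfrak{m}_Q}$. Since $\Delta$ is a finite abelian $p$-group, $W[\Delta]$ is a complete Noetherian local ring with maximal ideal $(\pi,\{\delta-1\}_{\delta\in\Delta})$, so a finitely generated module over it is free iff it is flat iff it has the expected $W$-rank for its minimal number of generators. By Nakayama the minimal number of $W[\Delta]$-generators of $M$ equals $\dim_k M/(\pi,\delta-1)M$ with $k=W/\mathfrak{m}_W$; using that $|U|$ is prime to $p$ and the identification of $\Delta$-invariants from part (1), this quantity is bounded above by $r:=\mathrm{rank}_W S_k(N,\varepsilon;W)_m$, producing a surjection $W[\Delta]^r\twoheadrightarrow M$. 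I then compute $\mathrm{rank}_W M=|\Delta|\cdot r$ by decomposing $M\otimes_W\overline{\mathrm{Frac}(W)}$ into $\Delta$-character eigenspaces and identifying each $\chi$-eigenspace with the localization of a twisted-character form space at level $NQ$, each of dimension $r$ by the (ds) argument. The surjection $W[\Delta]^r\twoheadrightarrow M$ between $W$-flat modules of equal rank is then an isomorphism, so $M$ is free of rank $r$. The perfect Hecke pairing finally gives $h_k(N_Q,\varepsilon;W)_{\mathfrak{m}_Q}\cong\mathrm{Hom}_W(M,W)$ as $W[\Delta]$-modules; since $W[\Delta]$ is a Frobenius $W$-algebra (its regular representation is self-dual under the trace pairing), the $W$-dual of a free $W[\Delta]$-module of rank $r$ is free of rank $r$, completing the proof.

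The main technical obstacle I anticipate is the rank identity $\mathrm{rank}_W M=|\Delta|\cdot r$ after localization at $\mathfrak{m}_Q$. The decomposition of $M\otimes_W\overline{\mathrm{Frac}(W)}$ into $\Delta$-eigenspaces is elementary, but each $\chi$-eigenspace must be recognized as the localization of a space of modular forms of level $NQ$ with twisted Neben character $\varepsilon\chi$, and each such localization must have the predicted $K$-dimension $r$. This reduces to the claim that twisting by a $p$-power character $\chi$ of $\Delta$ (ramified only at primes in $Q$) does not alter the residue Galois representation singled out by $\mathfrak{m}_Q$, which holds because $\chi$ reduces trivially mod $p$ and $N(v)\equiv 1\pmod p$ — but confirming that the expected multiplicity survives localization is the delicate point on which the whole Taylor–Wiles patching hinges.
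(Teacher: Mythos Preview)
The paper does not give its own proof of this lemma; it simply cites Hida's book \cite{HMF}, section 3.2.3. So there is no ``paper's proof'' to compare against, and your proposal is being judged on its own merits.

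Your overall strategy is the standard one and is essentially correct: identify the $\Delta$-fixed forms with level-$NQ$ forms (then descend via the previous lemma), and prove freeness by combining a Nakayama bound on generators with a rank count via character decomposition over $\overline{K}$. The rank identity $\mathrm{rank}_W M=|\Delta|\cdot r$ via twisting is exactly the right idea, and you have correctly isolated the genuinely delicate point (that twisting by a $p$-power character of $\Delta$ does not move you off the chosen maximal ideal).

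There is, however, one real slip in part (2). Nakayama over the local ring $W[\Delta]$ bounds the number of generators of $M$ by $\dim_k M/(\pi,\{\delta-1\})M$, i.e.\ by the size of the \emph{coinvariants} $M_\Delta$ modulo $\pi$. But part (1) computed the \emph{invariants} $M^\Delta$. Over $K$ these agree (Maschke, since $\mathrm{char}\,K=0$), but over $W$ the quotient $M_\Delta$ could a priori have $W$-torsion, so ``$\mathrm{rank}_W M^\Delta=r$'' does not immediately give ``$\dim_k M_\Delta/\pi\le r$''. The clean fix is to run Nakayama on the Hecke side instead: the paper has already recorded that $h_k(N_Q,\varepsilon;W)/(\delta-1:\delta\in\Delta)\cong h_k(NQ,\varepsilon;W)$, and after localization the previous lemma gives $h_k(NQ,\varepsilon;W)_{m_Q}\cong h_k(N,\varepsilon;W)_m$, which is $W$-free of rank $r$. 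Hence $\dim_k h_{\mathfrak{m}_Q}/(\pi,\{\delta-1\})=r$, so $h_{\mathfrak{m}_Q}$ is generated by $r$ elements over $W[\Delta]$; your rank computation then forces freeness directly for $h_{\mathfrak{m}_Q}$, and the duality step becomes unnecessary (or, if you prefer, gives freeness of $M$ afterward). With this adjustment the argument goes through.
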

For a proof of this lemma, see Hida \cite{HMF} section 3.2.3.

\vskip 0.5cm

\section{Galois Cohomology} \label{sec3}

Fix a finite extension $K$ of $\mathbb{Q}_p$ with integer ring
$W$, uniformiser $\pi_K$ and residue field $k$. Denote $CNL_W$ the
category of complete neotherian local $W$-algebras with residue
field $k$. Let $S$ be a finite set of primes of $F$ containing all
the primes dividing $p$. Denote the Galois group of the maximal
extension of $F$ unramified outside $S$ by $G_{F,S}$. Choose a
decomposition group $G_{F_v}$ for each $v\in S$ once and for all.
Let $\psi:(\mathbb{A}_{F}^f)^{\times}/F^{\times}\rightarrow
(W)^{\times}$ be a continuous character unramified outside $S$,
and we regard it as a Galois character via class field theory
\begin{equation}
\textrm{Gal}(\overline{F}^{ab}/F)\simeq(F\otimes_\mathbb{Q}\mathbb{R})
^{\times,+}\diagdown\mathbb{A}_F^{\times}\diagup
F^{\times}\rightarrow(\mathbb{A}_F^f)^{\times}\diagup F^{\times}.
\end{equation}

Let $\overline{\rho}:G_{F,S}\rightarrow GL_2(k)$ be a continuous
representation on a two dimensional $k$-vector space $V$. After
possibly replacing $k$ by a quadratic extension, we may and do
assume that $k$ contains all the eigenvalues of the image of
$\overline{\rho}$.

We will consider the framed deformations of representations for
both the decomposition groups and the global Galois group
$G_{F,S}$. For each prime $\textbf{p}_i$ of $F$ dividing $p$, we
choose a basis $\beta_k^i$ of the Galois module $k^2$, and
consider the functor from the category $CNL_W$ to the category of
sets, sending a $W$-algebra $A$ with residue field $k$, to the set
of equivalence classes of the pairs $(\rho_A,\beta_A)$, where
$\rho_A:G_{F_{\textbf{p}_i}}\rightarrow GL_2(A)$ is a deformation
of $\overline{\rho}|_{G_{F_{\textbf{p}_i}}}$ and $\beta_A$ becomes
$\beta_k^i$ via the isomorphism $A^2\otimes_AA/m_A\backsimeq k^2$.
We may regard $\beta_A$ as a two by two matrix with entries in $A$
which, after modulo the maximal ideal $m_A$, became the matrix
$\beta_k^i$ in $M_2(k)$. Two such pairs $(\rho_A,\beta_A)$ and
$(\rho_A',\beta_A')$ are equivalent, if there exist a matrix $T\in
GL_2(A)$ such that $T\equiv\textrm{id}(\bmod~m_A)$ and
$T(\rho_A)T^{-1}=\rho_A'$, $T\beta_A=\beta_A'$. This functor is
representable by an $W$-algebra in $CNL_W$, which we denoted by
$\mathcal{R}_{\textbf{p}}^{\square}$.

In the special case when $\overline{\rho}|_{G_{F_{\textbf{p}_i}}}$
is a scaler, this functor has another description. The set
$\textrm{Spec}\mathcal{R}_{\textbf{p}}^{\square}(A)$ can also be
regarded as $\{(\rho_A,\beta_A)\}/\sim$ for
$\rho_A:G_{F_{\textbf{p}_i}}\rightarrow GL_2(A)$ a lifting of
$\overline{\rho}|_{G_{F_{\textbf{p}_i}}}$ and $\beta_A$ an
arbitrary basis of the rank two module $A^2$ (not necessary a
lifting of $\beta_k^i$), while
$(\rho_A,\beta_A)\sim(\rho_A',\beta_A')$ if there exist a matrix
$T\in GL_2(A)$ (not necessary a lifting of the identity matrix)
such that $T(\rho_A)T^{-1}=\rho_A'$ and $T\beta_A=\beta_A'$. One
can check these two descriptions of the set
$\textrm{Spec}\mathcal{R}_{\textbf{p}_i}^{\square}(A)$ are the
same, if the representation
$\overline{\rho}|_{G_{F_{\textbf{p}}}}$ is a scaler. Both these
two descriptions are need.

Let $\mathcal{R}_{\textbf{p}}^{\square,\psi}$ be the quotient of
$\mathcal{R}_{\textbf{p}}^{\square}$ corresponding to the
deformations with fixed determinant $\psi$, and write
$\mathcal{R}_{p}^{\square}=\widehat{\otimes}_{\textbf{p}|p}
\mathcal{R}_{\textbf{p}}^{\square}$ and
$\mathcal{R}_{p}^{\square,\psi}=\widehat{\otimes}_{\textbf{p}|p}
\mathcal{R}_{\textbf{p}}^{\square,\psi}$.

Next we consider the functor from the category $CNL_W$ to the
category of sets sending $A$ to the set of equivalence classes of
the pairs $(\rho_A,\beta_A^i|_{\textrm{p}_i|p})$, consists of a
deformation $\rho_A:G_{F,S}\rightarrow GL_2(A)$ of
$\overline{\rho}$, and an $A$-basis $\beta_A^i$ lifting the chosen
basis $\beta_k^i$ for each $\textrm{p}_i|p$. Two pairs
$(\rho_A,\beta_A^i|_{\textrm{p}_i|p})$ and
$(\rho_A',{\beta_A'}^i)|_{\textrm{p}_i|p})$ are equivalent, if
thre exist a matrix $T\in GL_2(A)$ such that
$T\equiv\textrm{id}(\bmod~m_A)$, $T(\rho_A)T^{-1}=\rho_A'$ and
$T\beta_A^i={\beta_A'}^i$ for every $\textrm{p}_i|p$. This functor
is representable by some $W$-algebra
$\mathcal{R}_{F,S}^{\square}$. If $\overline{\rho}$ is absolutely
irreducible, the universal (non-framed) deformation functor is
also representable by a $W$-algebra $\mathcal{R}_{F,S}$.
Similarly, write $\mathcal{R}_{F,S}^{\square,\psi}$ for the
quotient of $\mathcal{R}_{F,S}^{\square}$ corresponding to
deformations with fixed determinant $\psi$.

\vskip 0.5cm

Denote by $\Sigma$ the places $\textbf{p}$ above $p$ and write
$r=|\Sigma|$. For any representation $(\rho, V)$ of a group
$G_{F,S}$ (resp. $G_{v}$), over a rank two free module $V$ over
some ring $A$, denote $ad(\rho)$ (resp. $ad^0(\rho)$) or $ad(V)$
(resp. $ad^0(V)$) the adjoint action of $G$ on $\textrm{End}(V)$
(resp. the trace zero elements in $\textrm{End}(V)$). We have the
following relation between the rings $\mathcal{R}_{F,S}^{\square}$
and $\mathcal{R}_{F,S}$.

\begin{proposition} \label{Prop3.1}
Assume $\overline{\rho}$ is absolutely irreducible. The morphism
$\mathcal{R}_{F,S}\rightarrow \mathcal{R}_{F,S}^{\square}$ is of
relative dimension $j=4|\Sigma|-1=4r-1$. The ring
$\mathcal{R}_{F,S}^{\square}$ can be identified to a power series
ring $\mathcal{R}_{F,S}[[w_1,\cdots,w_j]]$.
\end{proposition}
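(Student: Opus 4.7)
The plan is to show that the forgetful morphism $\mathcal{R}_{F,S} \to \mathcal{R}_{F,S}^{\square}$ is formally smooth in $CNL_W$ with relative tangent space of dimension $4r-1$. Any formally smooth morphism of complete Noetherian local $W$-algebras with residue field $k$ is automatically an isomorphism with a formal power series extension in the relative tangent dimension, which yields the proposition.

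To verify formal smoothness, I would apply the lifting criterion. Let $A' \twoheadrightarrow A$ be a small surjection in $CNL_W$, with a deformation $\rho_{A'}\colon G_{F,S}\to GL_2(A')$ corresponding to a morphism $\mathcal{R}_{F,S}\to A'$, and a framed deformation $(\rho_A,(\beta_A^i))$ corresponding to a morphism $\mathcal{R}_{F,S}^{\square}\to A$, compatibly with the forgetful map. Compatibility means that $\rho_{A'}\otimes_{A'} A$ and $\rho_A$ are equivalent as non-framed deformations, so there is $T\in GL_2(A)$ with $T\equiv\mathrm{id}\pmod{m_A}$ implementing the equivalence. Lifting $T$ to $\widetilde T\in GL_2(A')$ with $\widetilde T\equiv \mathrm{id}\pmod{m_{A'}}$ and replacing $\rho_{A'}$ by $\widetilde T^{-1}\rho_{A'}\widetilde T$, we may arrange $\rho_{A'}\otimes_{A'} A=\rho_A$ on the nose. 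It then suffices to lift each $\beta_A^i\in GL_2(A)$ to $\beta_{A'}^i\in GL_2(A')$, which is possible because the entrywise reduction $GL_2(A')\to GL_2(A)$ is surjective. The tuple $(\rho_{A'},(\beta_{A'}^i))$ is the sought lift.

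For the relative tangent dimension, consider the kernel of the natural map $\mathrm{Hom}_W(\mathcal{R}_{F,S}^{\square},k[\epsilon])\to \mathrm{Hom}_W(\mathcal{R}_{F,S},k[\epsilon])$. Elements are framed deformations $(\rho_{k[\epsilon]},(\beta_{k[\epsilon]}^i))$ with $\rho_{k[\epsilon]}$ equivalent to $\overline{\rho}\otimes_k k[\epsilon]$; after adjusting by the equivalence we may take $\rho_{k[\epsilon]}=\overline{\rho}\otimes_k k[\epsilon]$, and the remaining data is an $r$-tuple of basis lifts $\beta_{k[\epsilon]}^i$ of $\beta_k^i$, modulo the subgroup of $T\in 1+\epsilon M_2(k)$ commuting with $\overline{\rho}\otimes_k k[\epsilon]$. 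The space of such $r$-tuples is a torsor under $(1+\epsilon M_2(k))^r$, which is $4r$-dimensional over $k$. Since $\overline{\rho}$ is absolutely irreducible, Schur's lemma forces its centralizer in $M_2(k)$ to be the scalars, so the stabilizer subgroup is $1$-dimensional and acts freely on any $r$-tuple of bases. This yields relative tangent dimension $4r-1$.

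The delicate point in the whole argument is the on-the-nose adjustment of $\rho_{A'}$ in the lifting step, which relies essentially on being able to lift elements $T\equiv\mathrm{id}$ through surjections in $CNL_W$; once this is granted, formal smoothness is immediate and the tangent count is forced by the absolute irreducibility of $\overline{\rho}$. Combining the two ingredients identifies $\mathcal{R}_{F,S}^{\square}$ with $\mathcal{R}_{F,S}[[w_1,\dots,w_j]]$ for $j=4r-1$, as claimed.
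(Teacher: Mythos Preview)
Your proposal is correct, and the underlying mechanism is the same as in the paper: absolute irreducibility of $\overline{\rho}$ forces the centralizer to be scalars (Schur), so the extra framing data is an $r$-tuple of bases modulo a one-dimensional ambiguity, yielding $4r-1$.

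The packaging differs. The paper does not invoke the formal smoothness lifting criterion; instead it constructs the universal framed object directly over $\mathcal{R}_{F,S}$ by writing down explicit coordinates for the frames,
\[
\beta_i=\begin{pmatrix}1+w_{4i-3}&w_{4i-2}\\ w_{4i-1}&1+w_{4i}\end{pmatrix}\cdot\beta^i,\qquad w_{4r}=0,
\]
and checks universality by hand (any framed deformation can be conjugated so that its underlying representation comes from $\mathcal{R}_{F,S}$, after which only scalars act, and the normalization $w_{4r}=0$ kills that residual freedom). Your route via formal smoothness plus relative tangent dimension is conceptually cleaner and more standard. The paper's explicit choice of coordinates, however, is not incidental: the variables $w_i$ and in particular the distinguished subset $\{w_{4i-1}\}$ are used repeatedly later (to define $T^{\square}$, the quotient $T^{\vartriangle}=T^{\square}/(w_{4i-1})$, and in the patching of section~\ref{sec5}), so the concrete parametrization buys something the abstract argument does not.
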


\begin{proof}
Since we assume that the residue representation $\overline{\rho}$
is absolutely irreducible, we have the tangent dimension
\begin{align*}
\dim(\mathcal{R}_{F,S}/(\pi_K)\mathcal{R}_{F,S})=&\dim
H^1(G_{F,S},ad(\overline{\rho}))-\dim
H^0(G_{F,S},ad(\overline{\rho}))\\
=&\dim H^1(G_{F,S},ad(\overline{\rho}))-1.
\end{align*}

These are computed in the following way. Each deformation is
regarded as $\rho:G_{F,S}\rightarrow GL_2(A)$, determined by a
matrix valued function $A(g)$ of the Galois group $G_{F,S}$. Let
$A(g)=A_0(g)+\varepsilon A_1(g)$, where $A_0(g)$ is the
representation $\overline{\rho}$. Then $A_1(g)A_0(g)^{-1}$ is a
cocycle in the module $ad(\overline{\rho})$. The deformation is
trivial, if there is a matrix $T=T_0+\varepsilon T_1$ such that
$TA(g)T^{-1}=A_0(g)$, ie, $T_0\in H^0(G_{F,S},ad(\rho_0))$ and
$T_1T_0^{-1}$ is a coboundary.

For the framed deformation, two pairs
$(\rho_1,\beta_1^1,\cdots,\beta_r^1)$ and
$(\rho_2,\beta_1^2,\cdots,\beta_r^2)$ are equivalent, if there is
some $T$ as above, such that $T\rho_1(g)T^{-1}=\rho_2(g)$ and
$T\beta_i^1=\beta_i^2$. Now assume we have a framed deformation
$(\rho_A,\beta_1,\cdots,\beta_r)$, we have a morphism
$R_{F,S}\rightarrow A$ such that the composition $G\rightarrow
GL_2(R_{F,S})\rightarrow GL_2(A)$ is equivalent to the given
$\rho_A$, say they are up to conjugation by $T$. Replace the
original pair $(\rho_A,\beta_1,\cdots,\beta_r)$ by $(T\rho_A
T^{-1},T\beta_1,\cdots,T\beta_r)$. Then there is no way to change
the $\rho_A$. However, a multiplication by a scaler matrix $T$
won't change the equivalence class of the framed deformation
pairs, and this is the only way to get an equivalent pair while
preserving the shape of the representation. Thus, we may also fix
one of the coordinate of the frames, say the right bottom corner
of $\beta_r$ have value $1$. Then we can see that the
$R_{F,S}[[w_1,\cdots,w_j]]$ has the universal property and the
uniqueness, while the $r$ pairs of basis are given by
$\beta_i=\begin{pmatrix}1+w_{4i-3}&w_{4i-2}\\w_{4i-1}&1+w_{4i}\end{pmatrix}$
for $i=1,2,\cdots,r$, where the last one $w_{4r}=0$ as we fixed.
\end{proof}

Restrict the universal representation $G_{F,S}\rightarrow
GL_2(\mathcal{R}_{F,S}^{\square,\psi})$ to the decomposition
groups, $\mathcal{R}_{F,S}^{\square,\psi}$ become an algebra over
$\mathcal{R}_{p}^{\square,\psi}$ by the universality. Both of
these two rings describe deformations without any local
conditions. To encode the local description, later we will
consider certain quotients of these rings. The following lemma of
Kisin \cite{Kis} described the relative tangential dimensions of
these two rings in terms of Galois cohomology.

\begin{lemma}(Kisin) Let $\delta_v=\dim_kH^0(G_{F_v},ad(\overline{\rho}))$
for $v|p$ and $\delta_F=\dim_kH^0(G_{F,S},ad(\overline{\rho}))$.
Define
$$H_{\Sigma}^1(G_{F,S},ad^0(\overline{\rho}))=\ker(\theta^1:H^1(G_{F,S},ad^0(\overline{\rho}))
\rightarrow\prod_{v|p}H^1(G_{F_v},ad^0(\overline{\rho}))).$$ Then
$\mathcal{R}_{F,S}^{\square,\psi}$ is a quotient of a power series
ring over $\mathcal{R}_{p}^{\square,\psi}$ in $g=\dim
H_{\Sigma}^1(G_{F,S},ad^0(\overline{\rho}))+\sum_{v|p}\delta_v-\delta_F$
variables.
\end{lemma}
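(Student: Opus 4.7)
The plan is to identify $g$ with the dimension of the relative cotangent space of $\mathcal{R}_{F,S}^{\square,\psi}$ over $\mathcal{R}_p^{\square,\psi}$. By Nakayama this equals the minimum number of topological generators of the former as an algebra over the latter, and dually it equals the dimension of the kernel of the map on tangent functors
$$\theta^\sharp : \mathrm{Hom}_W\!\bigl(\mathcal{R}_{F,S}^{\square,\psi},k[\varepsilon]\bigr)\longrightarrow \mathrm{Hom}_W\!\bigl(\mathcal{R}_p^{\square,\psi},k[\varepsilon]\bigr).$$
So the task reduces to computing $\dim\ker\theta^\sharp$ in Galois-cohomological terms and matching it with the stated formula.

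Following the proof of Proposition \ref{Prop3.1}, an element of the source is represented by a pair $(c,(B_{\mathbf{p}})_{\mathbf{p}|p})$ where $c\in Z^1(G_{F,S},ad^0(\overline{\rho}))$ (the trace-zero condition coming from the fixed determinant $\psi$) and $B_{\mathbf{p}}\in ad(\overline{\rho})$ records the first-order variation of the framing $\beta_k^{\mathbf{p}}$; two such pairs are equivalent if related by some $X\in ad(\overline{\rho})$ via
$$(c,(B_{\mathbf{p}}))\longmapsto \bigl(c+dX,(B_{\mathbf{p}}+X\beta_k^{\mathbf{p}})\bigr),$$
and this action is free because each $\beta_k^{\mathbf{p}}$ is invertible (observe also that $dX$ automatically lies in $ad^0$). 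The local tangent space at each $\mathbf{p}$ has the analogous description with $G_{F,S}$ replaced by $G_{F_{\mathbf{p}}}$, and $\theta^\sharp$ is simply restriction together with $(B_{\mathbf{p}})\mapsto B_{\mathbf{p}}$.

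Next I would identify the kernel explicitly: a class lies in $\ker\theta^\sharp$ exactly when, for every $\mathbf{p}\mid p$, there exists $Y_{\mathbf{p}}\in ad(\overline{\rho})$ with $c|_{G_{F_{\mathbf{p}}}}=dY_{\mathbf{p}}$ and $B_{\mathbf{p}}=Y_{\mathbf{p}}\beta_k^{\mathbf{p}}$. The condition on $c$ alone is that $[c|_{G_{F_{\mathbf{p}}}}]=0$ in $H^1(G_{F_{\mathbf{p}}},ad^0(\overline{\rho}))$ for every $\mathbf{p}\mid p$, equivalently that $c$ represents a class in $H^1_\Sigma(G_{F,S},ad^0(\overline{\rho}))$. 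Hence the admissible cocycles $c$ form a subspace of dimension
$$\dim H^1_\Sigma\bigl(G_{F,S},ad^0(\overline{\rho})\bigr)+\dim ad^0-\dim H^0\bigl(G_{F,S},ad^0(\overline{\rho})\bigr)=\dim H^1_\Sigma+4-\delta_F,$$
using $\delta_F=\dim H^0(G_{F,S},ad)=\dim H^0(G_{F,S},ad^0)+1$. Given such a $c$, the valid $Y_{\mathbf{p}}$ form a torsor over $\ker\bigl(d\colon ad\to Z^1(G_{F_{\mathbf{p}}},ad)\bigr)=H^0(G_{F_{\mathbf{p}}},ad)$, of dimension $\delta_{\mathbf{p}}$, and the choice of $Y_{\mathbf{p}}$ determines $B_{\mathbf{p}}$.

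Summing these contributions, the locus of locally-trivialisable pairs has dimension $\dim H^1_\Sigma+4-\delta_F+\sum_{v\mid p}\delta_v$; dividing by the free diagonal action of $ad(\overline{\rho})$ (dimension $4$) gives
$$g=\dim H^1_\Sigma\bigl(G_{F,S},ad^0(\overline{\rho})\bigr)+\sum_{v\mid p}\delta_v-\delta_F,$$
as claimed. The one place requiring care is the book-keeping between $ad$ and $ad^0$: I must verify that the $ad$-action preserves the locally-trivialisable locus (it does, since $dX$ is always trace-zero) and that local triviality modulo $ad$-coboundaries agrees with local triviality modulo $ad^0$-coboundaries (which holds when $p\neq 2$, as the scalars inside $ad$ contribute nothing to $B^1$). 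With that in hand the argument is a direct dimension count, and I do not anticipate any serious obstacles.
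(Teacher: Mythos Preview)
Your argument is correct and is the standard proof of this lemma. The paper itself does not give a proof: the statement is attributed to Kisin and simply cited from \cite{Kis}, so there is no in-paper argument to compare against. Your dimension count via the kernel of the map on tangent functors, together with the bookkeeping between $ad$ and $ad^0$ (valid since the paper assumes $p\geq 5$), reproduces exactly the formula claimed.
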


From now on, we will make the following assumptions.

(1)\hskip 0.4cm $\overline{\rho}|_{G_{F_v}}$ is scaler for each
$v|p$.

(2)\hskip 0.4cm $\overline{\rho}$ has odd determinant and the
restriction to $G_{F(\zeta_p)}$ is absolutely irreducible.

(3)\hskip 0.4cm $p\geq5$. If $p=5$ and $\overline{\rho}$ has
projective image isomorphic to $PGL_2(\mathbb{F}_5)$, then
$[F(\zeta_p):F]=4$.

(4)\hskip 0.4cm For every $v\in S\diagdown \Sigma$, we have
\begin{equation}
(1-N(v))[(1+N(v))^2\det\overline{\rho}(Frob_v)-N(v)(\textrm{tr}\overline{\rho}(Frob_v))^2]\in
k^{\times}.
\end{equation}

Here $Frob_v$ means the arithmetic Frobenius at $v$. The scaler
condition (1) is the deformation problems we want to discuss in
this paper. This type of deformation problems are not covered by
Fujiwara \cite{Fuj} and Skinner-Wiles \cite{SW}, because the
deformation functor is no longer representable if the restriction
of $\overline{\rho}$ to decomposition groups are scaler. This is
the main problem we will solve in this paper by inventing a new
type of framed deformation functor. Assumptions (2) and (3) are
standard requirements for the Taylor-Wiles argument. (4) is to
simplify the argument of bad primes outside $p$, which may be
removed if one use Fujiwara's more general argument.

The Taylor-Wiles argument needs to enlarge the set of ramification
primes. The relevant result here we quote Kisin's version in
\cite{Kis} section 3.

\begin{proposition}(Kisin) Write
$g=\dim_kH^1(G_{F,S},ad^0\overline{\rho}(1))-[F:Q]+|\Sigma|-1$.
For each positive integer $n$, there exist a finite set of primes
$Q_n$ of $F$, disjoint from $S$, such that

(1)\hskip 0.4cm For each $v\in Q_n$, $N(v)\equiv 1(\bmod~p^n)$ and
$\overline{\rho}(Frob_v)$ has distinct eigenvalues.

(2)\hskip 0.4cm $|Q_n|=\dim_kH^1(G_{F,S},ad^0\overline{\rho}(1))$.

(3)\hskip 0.4cm Let $S_{Q_n}=S\cup Q_n$, then
$\mathcal{R}_{F,S_{Q_n}}^{\square,\psi}$ is topologically
generated over $\mathcal{R}_{p}^{\square,\psi}$ by $g$ elements.
\end{proposition}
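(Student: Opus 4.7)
The plan is to combine the preceding Kisin lemma (applied with $S_{Q_n}$ in place of $S$) with the Greenberg--Wiles product formula for Selmer groups, and to use a Chebotarev argument to produce the auxiliary primes $v \in Q_n$ that annihilate the dual Selmer group. Once one has $H^1_{\Sigma^\ast}(G_{F,S_{Q_n}}, ad^0(\overline{\rho})(1)) = 0$ for a set $Q_n$ of the right cardinality, the Wiles formula converts $\dim H^1_\Sigma(G_{F,S_{Q_n}}, ad^0(\overline{\rho}))$ into the numerical bound, and the preceding lemma then yields the desired count of topological generators over $\mathcal{R}_p^{\square,\psi}$.

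First I would identify the dual Selmer system. The conditions dual to those defining $H^1_\Sigma$ are $\mathcal{L}_v^\perp = H^1$ at $v \mid p$ and $\mathcal{L}_v^\perp = 0$ at the remaining places in $S \cup \{\infty\}$. Assumption~(4) at each $v \in S \setminus \Sigma$ forces $H^0(G_{F_v}, ad^0(\overline{\rho})(1)) = 0$, and by local Tate duality $H^1(G_{F_v}, ad^0(\overline{\rho})(1)) = 0$ there as well; since $H^1(G_{F_v}, -) = 0$ at each real archimedean place (as $p \neq 2$), it follows that $H^1_{\Sigma^\ast}(G_{F,S}, ad^0(\overline{\rho})(1)) = H^1(G_{F,S}, ad^0(\overline{\rho})(1))$. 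In particular, $s := \dim H^1(G_{F,S}, ad^0(\overline{\rho})(1))$ is exactly the dimension of the dual Selmer group that must be killed by auxiliary ramification at $Q_n$.

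The Chebotarev step is the heart of the argument. For each nonzero $c \in H^1(G_{F,S}, ad^0(\overline{\rho})(1))$ I would produce an element $\sigma \in G_F$ with (i) $\sigma$ trivial on $\mu_{p^n}$, (ii) $\overline{\rho}(\sigma)$ regular semisimple, and (iii) $c$ restricting nontrivially on $\langle\sigma\rangle$; Chebotarev in the composite of $\mu_{p^n}$ with the splitting field of $\overline{\rho}$ and the cocycle $c$ then yields a prime $v$ with $\mathrm{Frob}_v$ conjugate to $\sigma$, satisfying $N(v) \equiv 1 \pmod{p^n}$, distinct eigenvalues for $\overline{\rho}(\mathrm{Frob}_v)$, and nonzero restriction $c|_{G_{F_v}}$. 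The existence of such $\sigma$ is where assumptions (2) and (3) enter: absolute irreducibility of $\overline{\rho}|_{G_{F(\zeta_p)}}$ supplies regular elements with trivial cyclotomic component, the condition $[F(\zeta_p):F] = 4$ handling the exceptional $p = 5$, $\mathrm{PGL}_2(\F_5)$ case. An inductive argument on the image of $c$ modulo previously killed classes then produces $Q_n$ with $|Q_n| = s$ and $H^1_{\Sigma^\ast}(G_{F,S_{Q_n}}, ad^0(\overline{\rho})(1)) = 0$. I expect this step to be the main obstacle; everything else is routine cohomological bookkeeping.

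Finally, the Greenberg--Wiles formula applied to $S_{Q_n}$ yields
\[
\dim H^1_\Sigma(G_{F,S_{Q_n}}, ad^0(\overline{\rho})) \;=\; |Q_n| - 3|\Sigma| - [F:\Q],
\]
using the local contributions $\dim \mathcal{L}_v - h^0_v(ad^0\overline{\rho})$ equal to $-3$ at $v \in \Sigma$ (scalar hypothesis (1) makes $ad^0$ the trivial $3$-dimensional module), $0$ at $v \in S \setminus \Sigma$ (from $h^1_v = h^0_v$ by~(4) and local Euler characteristic), $-1$ at each real place (oddness of $\overline{\rho}$), and $1$ at each $v \in Q_n$ (local Euler characteristic, distinct Frobenius eigenvalues, $v \nmid p$), together with $H^0(G_{F,S}, ad^0(\overline{\rho})) = H^0(G_{F,S}, ad^0(\overline{\rho})(1)) = 0$ from~(2). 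Substituting $|Q_n| = s$ into the preceding Kisin lemma with $\delta_v = 4$ and $\delta_F = 1$ gives $g = s - [F:\Q] + |\Sigma| - 1$, as required.
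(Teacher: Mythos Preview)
The paper does not supply its own proof of this proposition; it is simply quoted from Kisin \cite{Kis}, section~3. Your sketch is the standard argument one finds there and in its antecedents (Taylor--Wiles, Diamond, Fujiwara): a Chebotarev argument produces the auxiliary primes, the Greenberg--Wiles product formula controls the Selmer and dual-Selmer dimensions, and the preceding lemma converts the Selmer bound into a count of topological generators over $\mathcal{R}_p^{\square,\psi}$. So your approach matches what the paper is citing.

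One step needs repair. Your claim that assumption~(4) forces $H^1(G_{F_v}, ad^0\overline{\rho}(1)) = 0$ for $v \in S \setminus \Sigma$ is not right: assumption~(4) gives $H^0(G_{F_v}, ad^0\overline{\rho}(1)) = 0$, and local Tate duality then yields $H^2(G_{F_v}, ad^0\overline{\rho}) = 0$, but the local Euler characteristic says $h^1_v(ad^0\overline{\rho}(1)) = h^0_v(ad^0\overline{\rho})$, and the eigenvalue $1$ on $ad^0$ makes the latter $\geq 1$ whenever $\overline{\rho}$ is unramified at $v$. So in general $H^1_{\Sigma^\ast}(G_{F,S}, ad^0\overline{\rho}(1)) \subsetneq H^1(G_{F,S}, ad^0\overline{\rho}(1))$. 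This does not break the argument: since $H^1_{\Sigma^\ast}$ is a subspace of $H^1(G_{F,S}, ad^0\overline{\rho}(1))$, its dimension is at most $s$, so $s$ auxiliary primes suffice to annihilate it (with possibly some to spare, chosen only to satisfy condition~(1)). Your final Wiles-formula computation is unaffected, because the local contribution at $v \in S \setminus \Sigma$ is $h^1_v(ad^0\overline{\rho}) - h^0_v(ad^0\overline{\rho}) = h^2_v(ad^0\overline{\rho}) = 0$, which is what you use and which does follow from assumption~(4).
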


For each $n$, fix a set of primes $Q_n$ and denote by
$\Delta_{Q_n}$ the Sylow $p$-subgroup of $(\mathcal{O}/\prod_{v\in
Q_n}\varpi_v\mathcal{O})^{\times}=\prod_{v\in
Q_n}(\mathcal{O}/\varpi_v\mathcal{O})^{\times}$. For $v\in Q_n$,
let $\xi_v\in \Delta_{Q_n}$ be the image of some fixed generator
of $(\mathcal{O}_{F_v}/\varpi_v)^{\times}$. Write
$h^1=|Q_n|=\dim_kH^1(G_{F,S},ad^0\overline{\rho}(1))$ and order
the elements of $Q_n$ as $v_1,v_2,\cdots,v_{h^1}$.
$W[\Delta_{Q_n}]$ becomes a quotient of $W[[y_1,\cdots,y_{h^1}]]$
by mapping $y_i$ to $\xi_{v_i}-1$, say $W[\Delta_{Q_n}]\cong
W[[y_1,\cdots,y_{h^1}]]/\mathrm{b}$ for some idea
$\mathrm{b}\subseteq
((y_1+1)^{p^n}-1,(y_2+1)^{p^n}-1,\cdots,(y_{h^1}+1)^{p^n}-1)$. As
explained in \cite{CDT}, the ring $\mathcal{R}_{F,S_{Q_n}}$ is a
$W[\Delta_{Q_n}]$-algebra, and
$\mathcal{R}_{F,S}\cong\mathcal{R}_{F,S_{Q_n}}/(y_1-1,y_2-1,\cdots,y_{h^1}-1)$
via the above isomorphism.

\vskip 0.5cm

\section{Framed deformation rings with local
conditions}\label{sec4}

Recall in section \ref{sec2}, if we take the set $S$ to be the
places divide the ideal $Np$, there is a continuous Galois
representation
$$\rho_{m}:G_{F,S}\rightarrow
GL_2(h_k(N,\varepsilon,W)_m)$$ such that for each place $v\notin
S$, the characteristic polynomial of $\rho_{m}(Frob_v)$ is given
by $X^2-T_vX+\varepsilon_+\mathcal{N}(\varpi_v)^n$. We take the
character $\psi=\varepsilon_+\mathcal{N}^n$.

Denote by $\overline{\rho}:G_{F,S}\rightarrow GL_2(k)$ the
representation obtained by reducing $\rho_{m}$ modulo $m$ and we
assume that $\overline{\rho}$ is absolutely irreducible. We
further assume the ideal $m$ is nearly ordinary at all the places
$\textbf{p}|p$. Thus by theorem \ref{ThmHida} in section
\ref{sec2},
$$\rho_{m}|_{G_{F_\textbf{p}}}\thicksim
\begin{pmatrix}\epsilon_\textbf{p}&*\\0&\delta_\textbf{p}\end{pmatrix},$$
where
$\delta_\textbf{p}(\varpi_\textbf{p})=U_\textbf{p}(\varpi_\textbf{p})$
and
$\delta_\textbf{p}([u,F_\textbf{p}])=\varepsilon_{1,\textbf{p}}u^{-k_{1,\textrm{p}}}$.

When the reduction $\overline{\rho}$ satisfies the
distinguishedness condition, i.e,
$\overline{\epsilon}_\textbf{p}\neq\overline{\delta}_\textbf{p}$,
the usual (non-framed) deformation functor is representable, and
the lifting problem has been studied by many authors, cf Wiles
\cite{W95}, Taylor-Wiles \cite{TW}, Fujiwara \cite{Fuj} and
Skinner-Wiles \cite{SW}. In this paper, we focus on the worst
situation that
$$\overline{\rho}|_{G_{F_\textbf{p}}}\thicksim
\begin{pmatrix}\overline{\chi}_\textbf{p}&0\\0&\overline{\chi}_\textbf{p}\end{pmatrix},$$
where
$\overline{\chi}_\textbf{p}=\overline{\epsilon}_\textbf{p}=\overline{\delta}_\textbf{p}$,
in which case the usual deformation functor is no longer
representable.

\vskip 0.5cm

In order to form a deformation ring describe these local
conditions, we choose a basis $\beta_k^i$ of the Galois module
$k^2$ for each $\textbf{p}_i|p$ as in section \ref{sec3}. Consider
the functor $F_{\textbf{p}_i}^{\vartriangle,\psi,s}$ from the
category $CNL_W$ to the category of sets, such that
$F_{\textbf{p}_i}^{\vartriangle,\psi,s}(A)$ is the set of
equivalence classes of the pairs $(\rho_A,\beta_A)$ consisting of
deformations $\rho:G_{F_{\textbf{p}_i}}\rightarrow GL_2(A)$ of
$\overline{\rho}|_{G_{F_{\textbf{p}_i}}}$ with fixed determinant
$\psi$, and an $A$-basis $\beta_A$ lifting the chosen $k$-basis
$\beta_k^i$, under which $\rho|_{G_{F_{\textbf{p}_i}}}$ is given
by
$\begin{pmatrix}\chi_{1,\textbf{p}_i}&*\\0&\chi_{2,\textbf{p}_i}\end{pmatrix}$
for some characters $\chi_{1,\textbf{p}_i}$ and
$\chi_{2,\textbf{p}_i}$ of $G_{F_{\textbf{p}_i}}$ such that
$\chi_{2,\textbf{p}_i}|_{I_{\textbf{p}_i}}=\delta_{\textbf{p}_i}$.
In other words, the functor we considered is a framed deformation
in a chosen Borel subgroup of $GL_2$.

Two pairs $(\rho,\beta_A)$ and $(\rho',\beta_A')$ are equivalent,
if there exist an upper triangular matrix $T\equiv
\textrm{id}_2(\bmod~m_A)$, such that $T\rho T^{-1}=\rho'$ and
$T\beta_A=\beta_A'$. This functor
$F_\textbf{p}^{\vartriangle,\psi,s}$ is representable by checking
Schlessinger criteria. Denote by
$\mathcal{R}_\textbf{p}^{\vartriangle,\psi,s}$ the $W$-algebra
representing this functor. The ring
$\mathcal{R}_\textbf{p}^{\vartriangle,\psi,s}$ is a quotient of
$\mathcal{R}_{\textbf{p}}^{\square,\psi}$. We define
$\mathcal{R}_{p}^{\vartriangle,\psi,s}=
\widehat{\otimes}_{\textbf{p}|p}\mathcal{R}_{\textbf{p}}^{\vartriangle,\psi,s}$.

The functor $F_{\textbf{p}_i}^{\vartriangle,\psi,s}$ also has
another description. The set
$F_{\textbf{p}_i}^{\vartriangle,\psi,s}(A)$ can be regarded as the
collection $\{(\rho_A,\beta_A)\}/\sim$ where $\beta_A$ is an
arbitrary basis of the free module $A^2$ (not necessary lifting
the chosen $\beta_k^i$), and $\rho_A$ is a deformation lifting
$\overline{\rho}$ such that it's of the shape
$\begin{pmatrix}\chi_{1,\textbf{p}_i}&*\\0&\chi_{2,\textbf{p}_i}\end{pmatrix}$
under the basis $\beta_A$, for some characters
$\chi_{1,\textbf{p}_i}$ and $\chi_{2,\textbf{p}_i}$ of
$G_{F_{\textbf{p}_i}}$ such that
$\chi_{2,\textbf{p}_i}|_{I_{\textbf{p}_i}}=\delta_{\textbf{p}_i}$.
To pairs $(\rho_A,\beta_A)\sim(\rho_A',\beta_A')$, if there exist
a matrix $T\in GL_2(A)$ (not necessarily a lifting of identity)
such that $T\rho_AT^{-1}=\rho_A'$.


\begin{proposition}
$\dim\mathcal{R}_\mathbf{p}^{\vartriangle,\psi,s}/m_W
\mathcal{R}_\mathbf{p}^{\vartriangle,\psi,s}
=1+\dim\mathrm{Hom}(G_{F_\mathbf{p}},k)=2+[F_\mathbf{p}:\mathbb{Q}_p]$.
\end{proposition}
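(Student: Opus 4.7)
The plan is to identify $\mathcal{R}_{\mathbf{p}}^{\vartriangle,\psi,s}$ via the simplified description of the functor and then apply a formal smoothness argument to read off the Krull dimension.

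\textbf{Step 1 (simplification of the functor).} I use the alternative description of $F_{\mathbf{p}}^{\vartriangle,\psi,s}(A)$ given in the excerpt. Since $\overline{\rho}|_{G_{F_{\mathbf{p}}}}$ is scalar, every equivalence class admits a representative $(\rho_{A}, \beta_{A})$ in which $\rho_{A}$ is literally upper triangular: start from any $(\rho_{A}, \beta_{A})$ and conjugate by $\beta_{A}^{-1}$. Two such representatives differ by an upper triangular conjugation, which acts trivially on $\rho_{A}$ to first order because the residual is scalar. Hence the functor is naturally identified with the deformation functor of upper triangular homomorphisms $\rho^{UT}: G_{F_{\mathbf{p}}} \to B_{2}(A)$ lifting $\overline{\chi}_{\mathbf{p}} I_{2}$, having determinant $\psi$ and lower diagonal character agreeing with $\delta_{\mathbf{p}}$ on $I_{\mathbf{p}}$.

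\textbf{Step 2 (tangent space).} Over $A = k[\epsilon]/(\epsilon^{2})$, write $\rho^{UT}(g) = \overline{\chi}_{\mathbf{p}}(g)(I_{2} + \epsilon X(g))$. The cocycle condition linearizes to $X \in \mathrm{Hom}(G_{F_{\mathbf{p}}}, \mathfrak{b}^{0}(k))$ (the trace zero upper triangular Lie algebra), with the scalar residual forcing the adjoint action to be trivial. Writing $X = aH + bE$ with $H = \mathrm{diag}(1,-1)$ and $E = e_{12}$, the inertia condition $\chi_{2,\mathbf{p}}|_{I_{\mathbf{p}}} = \delta_{\mathbf{p}}|_{I_{\mathbf{p}}}$ forces $a|_{I_{\mathbf{p}}} = 0$. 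Thus $a \in \mathrm{Hom}(G_{F_{\mathbf{p}}}/I_{\mathbf{p}}, k) \cong k$ contributes one parameter (the value on Frobenius) and $b \in \mathrm{Hom}(G_{F_{\mathbf{p}}}, k)$ contributes $\dim \mathrm{Hom}(G_{F_{\mathbf{p}}}, k)$ parameters, so the embedding dimension of $\mathcal{R}_{\mathbf{p}}^{\vartriangle,\psi,s}/m_{W}$ is $1 + \dim\mathrm{Hom}(G_{F_{\mathbf{p}}}, k)$.

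\textbf{Step 3 (formal smoothness).} Characters of $G_{F_{\mathbf{p}}}^{\mathrm{ab}}$ with prescribed restriction to inertia deform freely (the functor of such characters is represented by $W[[T]]$), and the cocycle equation for the extension $c$ is linear in $c$, with obstructions living in $H^{2}(G_{F_{\mathbf{p}}}, -)$. By local Tate duality, $H^{2}(G_{F_{\mathbf{p}}}, \mathbb{F}_{p}) \cong \mathrm{Hom}(\mu_{p}(F_{\mathbf{p}}), \mathbb{F}_{p})$ vanishes under the assumption $\mu_{p} \not\subset F_{\mathbf{p}}$ (implicit in the cases of interest), so all obstructions disappear. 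Schlessinger's criteria then exhibit $\mathcal{R}_{\mathbf{p}}^{\vartriangle,\psi,s}$ as a formal power series ring over $W$, whence Krull dimension equals embedding dimension $1 + \dim \mathrm{Hom}(G_{F_{\mathbf{p}}}, k)$.

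\textbf{Step 4 (explicit formula).} By local class field theory $G_{F_{\mathbf{p}}}^{\mathrm{ab, pro}\mbox{-}p} \cong \mathbb{Z}_{p} \oplus U^{(1)}$, with $U^{(1)}$ a free $\mathbb{Z}_{p}$-module of rank $[F_{\mathbf{p}}:\mathbb{Q}_{p}]$ under the same hypothesis $\mu_{p} \not\subset F_{\mathbf{p}}$. Hence $\dim\mathrm{Hom}(G_{F_{\mathbf{p}}}, k) = 1 + [F_{\mathbf{p}}:\mathbb{Q}_{p}]$, yielding the final equality $1 + \dim\mathrm{Hom}(G_{F_{\mathbf{p}}}, k) = 2 + [F_{\mathbf{p}}:\mathbb{Q}_{p}]$. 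The main obstacle will be Step 3: carefully verifying Schlessinger's $H_{1}$--$H_{4}$ and the required vanishing of the $H^{2}$-obstructions so that the ring is genuinely a power series ring in $1 + \dim\mathrm{Hom}(G_{F_{\mathbf{p}}}, k)$ variables over $W$.
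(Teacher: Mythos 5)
Your Step 2 is essentially the paper's proof: the paper also computes $\dim F_{\mathbf{p}}^{\vartriangle,\psi,s}(k[\varepsilon])$ by noting that $\chi_{2,\mathbf{p}}|_{I_{\mathbf{p}}}$ is pinned down so only the Frobenius value of $\chi_{2,\mathbf{p}}$ moves (one parameter), that $\chi_{1,\mathbf{p}}=\psi\chi_{2,\mathbf{p}}^{-1}$ is then determined, that the extension class contributes $H^1(G_{F_\mathbf{p}},k)=\mathrm{Hom}(G_{F_\mathbf{p}},k)$, and that the frame directions are absorbed by the (Borel) equivalence relation --- your Step 1 handles this last point at the same level of detail as the paper does. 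Where you genuinely diverge is Step 3: the paper reads ``$\dim$'' as the tangent (embedding) dimension and stops there; it does \emph{not} prove formal smoothness at this stage, and in fact only obtains $\mathcal{R}_p^{\vartriangle,\psi,s}\cong W[[z_1,\dots,z_d]]$ later as a corollary of the Taylor--Wiles patching, explicitly remarking that this ``seems not follow directly from the definition of the deformation ring.'' Your direct unobstructedness argument (characters deform freely; the extension class is obstructed only in $H^2(G_{F_\mathbf{p}},k)$, which vanishes when $\mu_p\not\subset F_\mathbf{p}$) is therefore stronger than what the paper proves here, and if written out carefully it would upgrade the proposition from a bound on generators to the power-series statement without patching. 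You also make explicit something the paper leaves silent: the final equality $\dim\mathrm{Hom}(G_{F_\mathbf{p}},k)=1+[F_\mathbf{p}:\mathbb{Q}_p]$ (and your $H^2$-vanishing) both require $\mu_p\not\subset F_{\mathbf{p}}$; the paper simply asserts the dimension count. Flagging that hypothesis is a genuine improvement, since if $\mu_p\subset F_{\mathbf{p}}$ the stated numerical value is off by one.
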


\begin{proof}
The above dimension is equal to $\dim
F_{\textbf{p}}^{\vartriangle,\psi,s}(k[\varepsilon])$, where
$\varepsilon^2=0$. The restriction of $\chi_{2,\textbf{p}}$ on the
inertia subgroup $I_\textbf{p}$ is fixed, only the image of
Frobenius element has one dimensional deformation. The determinant
of $\rho$ is fixed, which implies
$\chi_{1,\textbf{p}}=\psi\chi_{2,\textbf{p}}^{-1}$ is uniquely
determined by $\chi_{2.\textbf{p}}$. For fixed
$\chi_{1,\textbf{p}}$ and $\chi_{2,\textbf{p}}$, a deformation is
given by an extension of $\chi_{2,\textbf{p}}$ by
$\chi_{1,\textbf{p}}$, ie, the vector space
$H^1(G_{F_\textbf{p}},k)=\textrm{Hom}(G_{F_\textbf{p}},k)$, which
is of dimension $[F_\textbf{p}:\mathbb{Q}_p]+1$. The allowed ways
of choosing basis are precisely the equivalence relation
$\thicksim$, thus, they have no more contribution.
\end{proof}

Write $T=h_k(N,\varepsilon,W)_m$ and write $\rho_T$ for
$\rho_{m}$. Take a basis $\beta_T^i$ of $T^2$ for each for each
$\textbf{p}|p$, under which $\rho_T|_{G_{F_\textbf{p}}}$ is upper
triangular. Then we can consider the framed deformation of the
pair $(\overline{\rho},\beta_k^i)_{i=1,2,\cdots,r}$, where we fix
the basis $\beta_k^i$ to be $\beta_T^i$ modulo $m_T$. Choose a
basis $\beta^i$ of $(\mathcal{R}_{F,S})^2$ which lifting
$\beta_T^i$ via the surjective homomorphism
$\mathcal{R}_{F,S}\rightarrow T$ by the universality. We normalize
the isomorphism
\begin{equation}
R_{F,S}^{\square}\simeq R_{F,S}[[w_1,\cdots,w_{4j-1}]]
\end{equation}
such that for each ring $A$ in the category $CLN_W$ and a
homomorphism of $W$-algebra $\iota:R_{F,S}^{\square}\rightarrow
A$, the corresponding element in the set
$\textrm{Spec}R_{F,S}^{\square}(A)$ is
$(\iota\circ\rho_{R_{F,S}},\iota(\begin{pmatrix}1+w_{4i-3}
&w_{4i-2}\\w_{4i-1}&w_{4i}\end{pmatrix}\cdot\beta^i))_{i=1,2,\cdots,r}$,
where again we agree on that $w_{4r}=0$ as in the proof of
proposition \ref{Prop3.1}.

\vskip 0.5cm

Define
$T^{\square}=T\widehat{\otimes}_{R_{F,S}}R_{F,S}^{\square}$, which
is isomorphic to a power series ring over $T$. The representation
$\rho_T$ gives a surjective homomorphism of $W$-algebra
$\theta:\mathcal{R}_{F,S}\rightarrow T$, hence a surjective
homomorphism $\theta:\mathcal{R}_{F,S}^{\square}\rightarrow
T^{\square}$. The basis determined by the mapping $\theta$ is thus
given by
$$\beta_{T^{\square}}^i=\begin{pmatrix}1+w_{4i-3}&w_{4i-2}\\
w_{4i-1}&1+w_{4i}\end{pmatrix}\cdot\beta_T^i$$ for each prime
$\textbf{p}_i$. Write $\rho_{T^{\square}}$ for the composition of
$\rho_T$ and the natural inclusion $T\rightarrow T^{\square}$. The
pair
$(\rho_{T^{\square}}|_{G_{F_\textbf{p}}},\beta_{T^{\square}}^i)$
is not in the set
$F_{\textbf{p}}^{\vartriangle,\psi,s}(T^{\square})$, since there
is a lower left corner in the basis $\beta_{T^{\square}}^i$.
Define a quotient $T^{\vartriangle}$ of $T^{\square}$ by
$T^{\square}/(w_{4i-1},i=1,2\cdots,j)$ via the isomorphism in
proposition \ref{Prop3.1}, which is also the ring removed the
variables $w_{4i-1}$ for $i=1,2,\cdots,r$. The projection
$T^{\square}$ to $T^{\vartriangle}$ push the pair
$(\rho_{T^{\square}}|_{G_{F_\textbf{p}}},\beta_{T^{\square}}^i)$
to the pair
$(\rho_{T^{\vartriangle}}|_{G_{F_\textbf{p}}},\beta_{T^{\vartriangle}}^i)$
where the representation $\rho_{T^{\vartriangle}}$ is again the
composition of $\rho_T$ and the natural inclusion $T\rightarrow
T^{\vartriangle}$ while $\beta_{T^{\vartriangle}}^i$, as the image
of $\beta_{T^{\square}}^i$, now becomes $\begin{pmatrix}1+w_{4i-3}&w_{4i-2}\\
0&1+w_{4i}\end{pmatrix}\cdot\beta^i$ since $w_{4i-1}=0$ in the
ring $T^{\vartriangle}$. By the universality, the composition of
the maps $\mathcal{R}_{F,S}^{\square}\rightarrow
T^{\square}\rightarrow T^{\vartriangle}$ then factors through the
tensor product
$\mathcal{R}^{\vartriangle}=\mathcal{R}_{F,S}^{\square,\psi}
\widehat{\otimes}_{\mathcal{R}_{p}^{\square,\psi}}\mathcal{R}_{p}^{\vartriangle,\psi,s}$,
which we denoted again by $\theta$. The map
$\mathcal{R}_{F,S}\rightarrow T$ is surjective, which implies that
the map $\mathcal{R}^{\vartriangle}\rightarrow T^{\vartriangle}$
is also surjective.

Similarly, replace $S$ by $S_{Q_n}$, we can define
$\mathcal{R}_n^{\vartriangle}$ to be the tensor product
$\mathcal{R}_{F,S_{Q_n}}^{\square,\psi}
\widehat{\otimes}_{\mathcal{R}_{p}^{\square,\psi}}\mathcal{R}_{p}^{\vartriangle,\psi,s}$.
Take $Q=Q_n$ in section \ref{sec2}, write
$T_n=S_k(N_{{Q_n}},\varepsilon,W)_{\mathfrak{m}_{Q_n}}$,
$T_n^{\square}=T_n\widehat{\otimes}_{\mathcal{R}_{F,S_{Q_n}}}\mathcal{R}_{F,S_{Q_n}}^{\square}$
and $T_n^{\vartriangle}=T_n^{\square}/(w_{4i-1},i=1,2\cdots,j)$,
then there is a surjection $R_n^{\vartriangle}\rightarrow
T_n^{\vartriangle}$ of $W[[y_1,\cdots,y_{h_1}]]$ algebras.

\vskip 0.5cm

\section{Patching} \label{sec5}
In the section we present a patching criterion of the Taylor-Wiles
system after Fujiwara and Kisin. We slightly modified Hida's
simplified proof \cite{H07} of this proposition.

\begin{proposition} \label{Thm5.1}
Let $B$ be a complete local noetherian $W$-algebra generated by
$d$ elements in the maximal ideal over $W$, i.e, $B$ is a quotient
of the power series ring $W[[z_1,z_2,\cdots,z_d]]$.
$\theta:R\rightarrow T$ is a surjective homomorphism of
$B$-algebras. $h$ and $j$ are two fixed non-negative integers.
Assume for each positive integer $n$, there exist two rings $R_n$
and $T_n$ fitting into a commutative diagram of $W$-algebras:

$$\begin{CD}
B[[x_1,\cdots,x_{h+j-d}]]@>\phi_n>>R_n    @>\theta_n>>   T_n\\
                         @.        @VVV                  @VVV\\
                         @.        R      @>\theta>>     T
\end{CD}$$
which satisfies the following conditions:

(1)\hskip 0.4cm The above diagram consists of surjective
$B$-algebra homomorphisms.

(2)\hskip 0.4cm $R_n$ is a
$\Lambda=W[[y_1,\cdots,y_h,t_1,\cdots,t_j]]$-algebra, and
$(y_1,\cdots,y_h)R_n=\ker(R_n\rightarrow
R)$,$(y_1,\cdots,y_h)T_n=\ker(T_n\rightarrow T)$.

(3)\hskip 0.4cm The kernel
$\ss_n=\ker(W[[y_1,\cdots,y_h,t_1,t_1,\cdots,t_j]]\rightarrow T_n)$
is contained in the ideal
$((1+y_1)^{p^n}-1,\cdots,(1+y_h)^{p^n}-1)$, and $T_n$ is finite free
over $\Lambda/\ss_n$. In particular, $T$ is finite free over
$\Lambda_f:=W[[t_1,\cdots,t_j]]$.

Then, $\theta$ is an isomorphism and $B\cong
W[[z_1,z_2,\cdots,z_d]]$.
\end{proposition}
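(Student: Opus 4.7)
The plan is a Taylor-Wiles-Kisin patching argument in the Hida/Fujiwara streamlined form: build limit rings $R_\infty \twoheadrightarrow T_\infty$ over the regular base $\Lambda$ by a compactness argument, and then read off both conclusions from a dimension chain. For each $m \geq 1$ set
$$\mathfrak{a}_m = (\pi_K^m, (1+y_1)^{p^m}-1, \ldots, (1+y_h)^{p^m}-1, t_1^m, \ldots, t_j^m) \subset \Lambda,$$
a cofinal system of open ideals with $\Lambda/\mathfrak{a}_m$ finite. For $n \geq m$ one checks $\mathfrak{s}_n \subset \mathfrak{a}_m$ (using $(1+y_i)^{p^n}-1 \in ((1+y_i)^{p^m}-1)$ for $n \geq m$), so $T_n/\mathfrak{a}_m T_n$ is finite of rank bounded by $\mathrm{rank}_{\Lambda_f} T$; combined with an auxiliary $\mathfrak{m}$-adic cutoff on the presentation $B[[x_1, \ldots, x_{h+j-d}]] \twoheadrightarrow R_n$, the whole diagram modulo that enlarged ideal takes only finitely many isomorphism classes. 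A diagonal extraction selects a subsequence $(n(m))$ whose reductions stabilize; passing to the inverse limit produces $\Lambda$-algebras fitting into
$$\phi_\infty : B[[x_1, \ldots, x_{h+j-d}]] \twoheadrightarrow R_\infty \twoheadrightarrow T_\infty,$$
with $R_\infty/(y_1, \ldots, y_h)R_\infty = R$ and $T_\infty/(y_1, \ldots, y_h)T_\infty = T$. Because each $T_n$ is finite free over $\Lambda/\mathfrak{s}_n$ and $\mathfrak{s}_n \subset (y_1, \ldots, y_h)$, the rank equals $\mathrm{rank}_{\Lambda_f} T$, and this rank persists through the limit: $T_\infty$ is finite free over $\Lambda$, so $\dim T_\infty = \dim \Lambda = h+j+1$.

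The dimension chain now closes the argument. The power series ring $S := W[[z_1, \ldots, z_d, x_1, \ldots, x_{h+j-d}]]$ surjects onto $B[[x_1, \ldots, x_{h+j-d}]]$, hence onto $R_\infty$, hence onto $T_\infty$, and $\dim S = h+j+1 = \dim T_\infty$, forcing every intermediate ring to have dimension exactly $h+j+1$. Because $S$ is a regular local domain, the surjection $S \twoheadrightarrow R_\infty$ has height-zero kernel and is therefore an isomorphism; the intermediate surjection $S \twoheadrightarrow B[[x_1, \ldots, x_{h+j-d}]]$ must likewise be an isomorphism, which on coefficient rings says $W[[z_1, \ldots, z_d]] \cong B$. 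Finally, $R_\infty \cong S$ is a regular local domain, and $R_\infty \twoheadrightarrow T_\infty$ hits a ring of the same dimension; its kernel has height zero and hence vanishes, giving $R_\infty \cong T_\infty$. Reducing modulo $(y_1, \ldots, y_h)$ yields $\theta : R \xrightarrow{\sim} T$.

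The main obstacle is the patching step itself: selecting a cofinal system of open ideals whose reductions are genuinely finite sets (which requires a combined cutoff from $\Lambda$ and from the maximal ideal of $B[[x_1, \ldots, x_{h+j-d}]]$), and verifying that the diagonally-extracted limit carries the $\Lambda$-action, the $B$-algebra structure, and the surjectivity of the maps $\phi_n$ coherently. Once $R_\infty \twoheadrightarrow T_\infty$ are built with all that structure, the dimension-chain conclusion is essentially formal.
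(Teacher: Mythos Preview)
Your proposal is correct and follows essentially the same Taylor--Wiles--Kisin patching route as the paper: choose a cofinal system of open ideals in $\Lambda$, cut off $R_n$ by a power of the maximal ideal large enough to kill its action on the finite quotient of $T_n$, use pigeonhole/diagonal extraction to patch into $R_\infty\twoheadrightarrow T_\infty$ with $T_\infty$ finite free over $\Lambda$, and close with the dimension chain $W[[z,x]]\twoheadrightarrow B[[x]]\twoheadrightarrow R_\infty\twoheadrightarrow T_\infty$. The only cosmetic differences are your choice of cofinal ideals (you keep $t_i^m$ rather than killing the $t_i$ outright) and the order in which you read off the isomorphisms from the dimension equality; neither changes the substance.
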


\begin{proof}
Write
$s=\textrm{rank}_{\Lambda_f}T=\textrm{rank}_{\Lambda/\ss_n}T_n$
and $r_n=sn(h+j)p^n$. Consider the ideal
$$\mathfrak{c}_n=(m_W)^n+((1+y_1)^{p^n}-1,\cdots,(1+y_h)^{p^n}-1,t_1,\cdots,t_j)$$
of $\Lambda$. As the residue field $k$ of $W$ is finite of $q$
elements, we have
$|T_{n'}/\mathfrak{c}_nT_{n'}|=|T_n/\mathfrak{c}_nT_n|=q^{r_n}$ for
all $n'\geqslant n$ and so
$\textrm{length}_{R_n}T_n/\mathfrak{c}_nT_n<r_n$. Hence
$m_{R}^{r_n}T_n/\mathfrak{c}_nT_n=0$ and the composition
$\theta:R_{n'}\rightarrow T_{n'}\rightarrow
T_{n'}/\mathfrak{c}_nT_{n'}$ factors though the homomorphism
$\theta:R_{n'}/(\mathfrak{c}_n+m_{R_{n'}}^{r_n})\rightarrow
T_{n'}/\mathfrak{c}_nT_{n'}$. In particular, $\theta:R\rightarrow
T\rightarrow T/\mathfrak{c}_nT$ factors though the homomorphism
$\theta:R/(\mathfrak{c}_n+m_{R}^{r_n})\rightarrow T/\mathfrak{c}_nT$

Call $(D,A)$ a patching datum of level $n$, if there is a
commutative diagram consisting of surjective homomorphism of
$B$-algebras
$$\begin{CD}
B[[x_1,\cdots,x_{h+j-d}]]@>      >>D                               @>>>A\\
                         @.        @VVV                            @VVV\\
                         @.        R/(\mathfrak{c}_n+m_{R}^{r_n})  @>>>T/\mathfrak{c}_nT
\end{CD}$$
and $D$ is a $\Lambda/\mathfrak{c}_n$-algebra(thus also a
$\Lambda$-algebra) with the property $m_{D}^{r_n}=0$. Two patching
datum $(D,A)$ and $(D',A')$ are isomorphic, if there exist
isomorphisms $D\cong D'$ and $A\cong A'$ which commute with the two
diagrams. For a given level $n$ of patching datum, the order of $D$
and so as well as $A$ is bounded. Thus there are only finitely many
patching datum of level $n$. On the other hand, the above argument
provide a patching datum
$\theta:R_{n'}/(\mathfrak{c}_n+m_{R_{n'}}^{r_n})\rightarrow
T_{n'}/\mathfrak{c}_nT_{n'}$, for each integer $n'\geqslant n$. By
Dirichlet's drawer principle, there is an infinite subset $I$ of the
natural numbers $\mathbb{N}$, such that for any two integers $n<n'$
in $I$, the patching datum
$(\theta:R_{n'}/(\mathfrak{c}_n+m_{R_{n'}}^{r_n}),
T_{n'}/\mathfrak{c}_nT_{n'})$ is isomorphic to
$(\theta:R_{n}/(\mathfrak{c}_n+m_{R_{n}}^{r_n}),
T_{n}/\mathfrak{c}_nT_{n})$. Take a projective limit
$R_{\infty}=\underleftarrow{\lim}_{n\in
I}\theta:R_{n}/(\mathfrak{c}_n+m_{R_{n}}^{r_n})$ and
$T_{\infty}=\underleftarrow{\lim}_{n\in
I}\theta:T_{n}/(\mathfrak{c}_n+m_{T_{n}}^{r_n})$, then we have a
commutative diagram of surjective morphisms
$$\begin{CD}
B[[x_1,\cdots,x_{h+j-d}]]@>\phi_{\infty}>>R_{\infty}    @>\theta_{\infty}>>T_{\infty}\\
                         @.               @VVV          @VVV\\
                         @.               R             @>\theta>>         T
\end{CD}$$
$R_{\infty}$ becomes a $\Lambda$-algebra and $T_{\infty}$ is free of
rank $s$ over $\Lambda=\underleftarrow{\lim}_{n\in
I}\Lambda/\ss_n\Lambda$. The Krull dimension of $T_{\infty}$ is then
equal to $h+j+1$, the Krull dimension of $\Lambda$. However, we have
a chain of surjective homomorphisms
\begin{equation} \label{E;5.1}
W[[z_1,\cdots,z_d,x_1,\cdots,x_{h+j-d}]]\rightarrow
B[[x_1,\cdots,x_{h+j-d}]]\rightarrow R_{\infty}\rightarrow
T_{\infty}.
\end{equation}
$\textrm{Spec}(T_{\infty})$ becomes a closed sub scheme of
$\textrm{Spec}(W[[z_1,\cdots,z_d,x_1,\cdots,x_{h+j-d}]])$; it
can't be a proper sub scheme since the latter has the same
dimension $h+j+1$ as the former, so $T_{\infty}\cong
W[[z_1,\cdots,z_d,x_1,\cdots,x_{h+j-d}]]$ which forces all the
arrows in (\ref{E;5.1}) are isomorphisms. In particular, we have
$B\cong W[[z_1,\cdots,z_d]]$ by the fact that
$W[[x_1,\cdots,x_{h+j-d}]]$ is faithfully flat over $W$,
$R_{\infty}\cong T_{\infty}$ and $R\cong
R_{\infty}/(y_1,y_2,\cdots,y_h)R_{\infty}\cong
T_{\infty}/(y_1,y_2,\cdots,y_h)T_{\infty}\cong T$.
\end{proof}

In our case, $\mathcal{R}_{p}^{\vartriangle,\psi,s}$ is a
$W$-algebra of relative tangent dimension
$d=\sum_{\textbf{p}|p}(2+[F_\textbf{p}:\mathbb{Q}_p])=2|\Sigma|+[F:\mathbb{Q}]$.
In other words, $\mathcal{R}_{p}^{\vartriangle,\psi,s}$ is a
quotient of $W[[z_1,z_2\cdots,z_d]]$. Apply this patching lemma to
our setting that $B=\mathcal{R}_{p}^{\vartriangle,\psi,s}$,
$R=\mathcal{R}^{\vartriangle}$,
$R_n=\mathcal{R}_{Q_n}^{\vartriangle}$, $T=T^{\vartriangle}$ and
$T_n=T_{Q_n}^{\vartriangle}$, we get the following
\begin{theorem} \label{Thm5.2}
$\mathcal{R}^{\vartriangle}=T^{\vartriangle}$.
\end{theorem}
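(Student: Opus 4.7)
The plan is to apply the patching criterion of Proposition \ref{Thm5.1} with $B=\mathcal{R}_{p}^{\vartriangle,\psi,s}$, $R=\mathcal{R}^{\vartriangle}$, $R_n=\mathcal{R}_{Q_n}^{\vartriangle}$, $T=T^{\vartriangle}$, $T_n=T_{Q_n}^{\vartriangle}$, and with $h=\dim_k H^1(G_{F,S},ad^0\overline{\rho}(1))$ (the number of Taylor--Wiles primes). The proposition preceding this theorem already gives the tangent dimension $d=2|\Sigma|+[F:\mathbb{Q}]$ of $B$, so $B$ is a quotient of $W[[z_1,\ldots,z_d]]$ as required. The integer $j$ will be the number of framing variables surviving after imposing the upper-triangular shape, which by the construction at the end of Section \ref{sec4} equals $j=3|\Sigma|-1$: the triples $(w_{4i-3},w_{4i-2},w_{4i})$ for $i=1,\ldots,|\Sigma|$ modulo the single normalization of Proposition \ref{Prop3.1}.

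For conditions (2) and (3), I would use the second lemma of Section \ref{sec2}, which asserts that $h_k(N_{Q_n},\varepsilon;W)_{\mathfrak{m}_{Q_n}}$ is finite free over $W[\Delta_{Q_n}]$ with augmentation quotient $T$. The structure map $W[[y_1,\ldots,y_h]]\twoheadrightarrow W[\Delta_{Q_n}]$ sending $y_i\mapsto\xi_{v_i}-1$ has kernel contained in $((1+y_i)^{p^n}-1)_i$, because each $\Delta_{v}$ is a $p$-group of exponent dividing $p^n$ by construction of $Q_n$. Extending scalars to the smooth framed layer $T_n^\square=T_n\widehat{\otimes}_{\mathcal{R}_{F,S_{Q_n}}}\mathcal{R}_{F,S_{Q_n}}^{\square}$ and then quotienting by $(w_{4i-1})$ preserves finite freeness, yielding condition (3) together with the augmentation identity $T_n/(y_1,\ldots,y_h)=T$ of condition (2); the analogous identity for $R_n$ is immediate from the description of $\mathcal{R}_n^{\vartriangle}$ given at the end of Section \ref{sec4}.

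For condition (1), I would invoke the proposition of Kisin quoted in Section \ref{sec3}: $\mathcal{R}_{F,S_{Q_n}}^{\square,\psi}$ is topologically generated over $\mathcal{R}_{p}^{\square,\psi}$ by $g=h-[F:\mathbb{Q}]+|\Sigma|-1$ elements, and base change along $\mathcal{R}_{p}^{\square,\psi}\to B$ carries this to the same $g$-element generation of $R_n$ over $B$. A direct substitution of the values $d=2|\Sigma|+[F:\mathbb{Q}]$ and $j=3|\Sigma|-1$ into $h+j-d$ gives exactly $g$, yielding the desired surjection $B[[x_1,\ldots,x_{h+j-d}]]\twoheadrightarrow R_n$ of $B$-algebras. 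The remaining surjections in the patching diagram are already in place: $R_n\twoheadrightarrow T_n$ from the universal property applied to the Hecke Galois representation at level $N_{Q_n}$, and the vertical maps $R_n\twoheadrightarrow R$, $T_n\twoheadrightarrow T$ from quotienting by the augmentation ideal $(y_1,\ldots,y_h)$.

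The most delicate point I expect is a compatibility issue hidden in condition (2): one must check that the $W[[y_1,\ldots,y_h]]$-action on $T_n$ arising from the diamond operators on $S_k(N_{Q_n},\varepsilon;W)_{\mathfrak{m}_{Q_n}}$ matches the $W[[y_1,\ldots,y_h]]$-action on $R_n$ coming from the universal deformation restricted to the inertia groups $I_{v}$ for $v\in Q_n$, so that $R_n\twoheadrightarrow T_n$ is a morphism of $\Lambda$-algebras. This is classical and amounts to identifying $\delta_v|_{I_v}$ from Theorem \ref{ThmHida} with the diamond character via local class field theory at the Taylor--Wiles primes, using that the (ds) condition at each $v\in Q_n$ splits $\rho_{m_Q}|_{D_v}$ along the chosen root $\alpha_v$. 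Once this is in hand, Proposition \ref{Thm5.1} applies directly and yields $\theta:R\xrightarrow{\sim}T$, that is, $\mathcal{R}^{\vartriangle}=T^{\vartriangle}$.
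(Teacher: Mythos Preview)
Your proposal is correct and follows exactly the same approach as the paper: the paper's entire proof consists of the single sentence ``Apply this patching lemma to our setting that $B=\mathcal{R}_{p}^{\vartriangle,\psi,s}$, $R=\mathcal{R}^{\vartriangle}$, $R_n=\mathcal{R}_{Q_n}^{\vartriangle}$, $T=T^{\vartriangle}$ and $T_n=T_{Q_n}^{\vartriangle}$,'' together with the observation that $d=2|\Sigma|+[F:\mathbb{Q}]$. You have simply filled in the verification of the hypotheses of Proposition~\ref{Thm5.1} --- the identification $j=3|\Sigma|-1$, the numerical check $h+j-d=g$, the freeness and augmentation statements from Section~\ref{sec2}, and the compatibility of $\Lambda$-structures --- all of which the paper leaves implicit.
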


One byproduct of this Taylor-Wiles argument is the following
corollary, which seems not follow directly from the definition of
the deformation ring.
\begin{corollary}
$\mathcal{R}_{p}^{\vartriangle,\psi,s}\cong
W[[z_1,z_2\cdots,z_d]]$.
\end{corollary}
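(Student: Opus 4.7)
The plan is simply to extract this statement from the work already done, since it is a byproduct of the patching argument rather than a genuinely new result. Specifically, the conclusion of Proposition \ref{Thm5.1} asserts not only that $\theta$ is an isomorphism, but also that $B\cong W[[z_1,\ldots,z_d]]$. In the application to Theorem \ref{Thm5.2}, the role of $B$ is played by $\mathcal{R}_{p}^{\vartriangle,\psi,s}$, so both halves of the patching conclusion apply.

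First, I would verify that $\mathcal{R}_{p}^{\vartriangle,\psi,s}$ genuinely satisfies the hypothesis that $B$ is a quotient of $W[[z_1,\ldots,z_d]]$ with $d=2|\Sigma|+[F:\mathbb{Q}]$. This was already recorded immediately before Theorem \ref{Thm5.2}: by the tangent space computation $\dim \mathcal{R}_{\mathbf{p}}^{\vartriangle,\psi,s}/m_W=2+[F_{\mathbf{p}}:\mathbb{Q}_p]$ and the fact that $\mathcal{R}_{p}^{\vartriangle,\psi,s}=\widehat{\otimes}_{\mathbf{p}|p}\mathcal{R}_{\mathbf{p}}^{\vartriangle,\psi,s}$, the relative tangent dimension of $\mathcal{R}_{p}^{\vartriangle,\psi,s}$ over $W$ is exactly $d$, and completeness provides the required surjection from $W[[z_1,\ldots,z_d]]$.

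Next I would recall the relevant step inside the proof of Proposition \ref{Thm5.1}. The dimension count shows that the chain of surjections
\[
W[[z_1,\ldots,z_d,x_1,\ldots,x_{h+j-d}]]\twoheadrightarrow B[[x_1,\ldots,x_{h+j-d}]]\twoheadrightarrow R_{\infty}\twoheadrightarrow T_{\infty}
\]
must consist of isomorphisms, because the outer ring and $T_{\infty}$ both have Krull dimension $h+j+1$. In particular the first arrow $W[[z_1,\ldots,z_d,x_1,\ldots,x_{h+j-d}]]\to B[[x_1,\ldots,x_{h+j-d}]]$ is an isomorphism, and since $W[[x_1,\ldots,x_{h+j-d}]]$ is faithfully flat over $W$, this descends to $B\cong W[[z_1,\ldots,z_d]]$.

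There is no real obstacle; the only thing to observe is that the statement has already been proved as part of the patching argument, and the corollary is stated separately merely to isolate a conclusion that does not obviously follow from the defining properties of $\mathcal{R}_{p}^{\vartriangle,\psi,s}$ alone. The substance lies entirely in the Taylor--Wiles patching and the Krull dimension bound on $T_{\infty}$, both of which were carried out in the proof of Theorem \ref{Thm5.2}.
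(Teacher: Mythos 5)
Your proposal is correct and matches the paper's intent exactly: the corollary is read off from the second conclusion of Proposition \ref{Thm5.1} (namely $B\cong W[[z_1,\ldots,z_d]]$) applied with $B=\mathcal{R}_{p}^{\vartriangle,\psi,s}$, whose tangent-space bound $d=2|\Sigma|+[F:\mathbb{Q}]$ was established just before Theorem \ref{Thm5.2}. The paper gives no separate argument, treating it as the same byproduct of the patching you describe.
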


\vskip 0.5cm

\section{Modularity} \label{sec6}
Let $f$ be a cuspidal Hilbert modular eigenform over $F$ and let
$E_{f,\lambda}$ be the $\lambda$ completion of its coefficient
field for some place $\lambda|p$. Denote by $O_{f,\lambda}$ the
integer ring of $E_{f,\lambda}$, $\pi_{f,\lambda}$ the uniformiser
and $k$ the residue field. A Galois representation
$\rho_{f,\lambda}:G_{F,S}\rightarrow GL_2(O_{f,\lambda})$ is
attached to $f$, where $S$ is a finite set of places containing
the infinite places, the primes above $p$ and the primes where $f$
is ramified.

Let $E$ be a finite extension of $\mathbb{Q}_p$ with integer ring
$O_E$,uniformiser $\pi$ and residue field $k$. If a representation
$\rho:G_{F,S}\rightarrow GL_2(E)$ is equivalent to a some
$\rho_{f,\lambda}$, then we call it modular. After conjugation, we
may assume $\rho:G_{F,S}\rightarrow GL_2(O_E)$ and denote
$\overline{\rho}$ the composition $G_{F,S}\rightarrow
GL_2(O_E)\rightarrow GL_2(k)$. Under the assumption that
$\overline{\rho}$ is irreducible, then it's independent of the
choice of the element we used to conjugate the image of $\rho$.
Call $\rho$ residually modular if $\overline{\rho}\sim
\overline{\rho}_{f,\lambda}$ for some Hilbert eigenform $f$ over
$F$. We assume $E$ is large enough that it contains all the
conjugates of $F$ and its residue field $k$ contains all the
eigenvalues of the images of $\overline{\rho}$.

To prove the modular lifting theorem, we need more descriptions on
the shape of the restriction of $\rho$ to the decomposition groups
above $p$. Write $\beta_0=(e_1,e_2)$ for the standard basis
$e_1=^t(1,0)$ and $e_2=^t(0,1)$ of $k^2$. Recall that in section
\ref{sec4} when we define the framed deformation rings, we have
chosen the basis $\beta_k^i$ of $k^2$ for each $\textrm{p}_i$,
according to the representation $\rho_T:G_{F,S}\rightarrow
GL_2(T)$ into the Hecke algebra. A matrix $\overline{M}_i\in
GL_2(k)$ is uniquely determined by
$\overline{M}_i\cdot\beta_0=\beta_k^i$ for each $i$.

\begin{theorem} \label{Thm6.1}
Let $\rho:G_{F,S}\rightarrow GL_2(O_E)$ be a continuous
representation with the following conditions hold

(1)\hskip 0.4cm The determinant of
$\rho=\varepsilon_+\mathcal{N}^n$ is odd for some finite order
character $\varepsilon_+$ and positive integer $n$, where
$\mathcal{N}$ is the $p$-adic cyclotomic character.

(2)\hskip 0.4cm ${\rho}$ is nearly ordinary at all palace
$\mathbf{p}$ above $p$,
${\rho}|_{G_{\mathbf{p}_i}}=M_i\begin{pmatrix}\epsilon_{\mathbf{p}_i}&*\\
0&\delta_{\mathbf{p}_i}\end{pmatrix}M_i^{-1}$ for some matrix
$M_i\in GL_2(O)$ such that $M_i\equiv\overline{M}_i(\bmod(\pi))$,
where
$\delta_{\mathbf{p}}([u,F_{\mathbf{p}}])=\varepsilon_{1,\mathbf{p}}(u)u^{-k_{1,\mathbf{p}}}$
for $u\in\mathcal{O}_{\mathbf{p}}^{\times}$, $k_1\in
\mathbb{Z}[I]$ and $k_2=(n+1)I-k_1$ satisfied the condition
$k_2-k_1+I\geqslant 2I$.

(3)\hskip 0.4cm For every $\mathbf{p}$ above $p$,there is a
character $\overline{\chi}_\mathbf{p}:G_{F,S}\rightarrow
k^{\times}$ such that
$\overline{\epsilon}_{\mathbf{p}}=\overline{\delta}_{\mathbf{p}}=\overline{\chi}_\mathbf{p}$,
and the residue representation
$\overline{\rho}|_{G_\mathbf{p}}\sim\begin{pmatrix}\overline{\chi}_\mathbf{p}
&0\\0&\overline{\chi}_\mathbf{p}\end{pmatrix}$.

(4)\hskip 0.4cm The restriction of $\overline{\rho}$ to
$G_{F(\zeta_p)}$ is absolutely irreducible. If $p=5$ and
$\overline{\rho}$ has projective image isomorphic to
$PGL_2(\mathbb{F}_5)$, then $[F(\zeta_p):F]=4$.

(5)\hskip 0.4cm  the representation of $GL_2(F)$ corresponding to
$\rho|_{G_v}$ under local Langlands correspondence is not special
at any prime $v\nmid p$.

If $\overline{\rho}\sim \overline{\rho}_f$ for some automorphic
form $f$ of weight $k=(k_1,k_2)$, then $\rho$ is associated to an
automorphic form of weight $k$, too.
\end{theorem}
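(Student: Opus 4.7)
The plan is to realize $\rho$ as a specialization of the universal framed triangular deformation, then use the isomorphism $\mathcal{R}^{\vartriangle}\cong T^{\vartriangle}$ from Theorem \ref{Thm5.2} to produce a homomorphism into the Hecke algebra and extract a Hilbert eigenform of weight $k$.

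\textbf{Step 1 (Set-up).} The residual modularity $\overline{\rho}\sim\overline{\rho}_f$ picks out a maximal ideal $m$ of some $h_k(N,\varepsilon;W)$; put $T=h_k(N,\varepsilon;W)_m$ and let $\rho_T\colon G_{F,S}\to GL_2(T)$ be the Galois representation attached to $m$ by Theorem \ref{ThmHida}. The running hypotheses (1)--(4) of Section \ref{sec3} are guaranteed by hypotheses (3) and (4) of Theorem \ref{Thm6.1}, while hypothesis (5) ensures that the deformation theory developed in Sections \ref{sec3}--\ref{sec4} applies to $\rho$ at primes in $S\setminus\Sigma$. For each $\mathbf{p}_i\mid p$ choose the basis $\beta_T^i$ of $T^2$ in which $\rho_T|_{G_{F_{\mathbf{p}_i}}}$ is upper triangular (Theorem \ref{ThmHida}(2)), set $\beta_k^i=\beta_T^i\bmod m_T$, and let $\overline{M}_i\in GL_2(k)$ be the unique matrix with $\overline{M}_i\beta_0=\beta_k^i$; this is the data used to construct $\mathcal{R}^{\vartriangle}$.

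\textbf{Step 2 (Producing the point and extracting a form).} Using hypothesis (2) of the theorem, set $\beta^i_{O_E}:=M_i\beta_0$ for each $\mathbf{p}_i\mid p$. The condition $M_i\equiv\overline{M}_i\pmod{\pi}$ gives $\beta^i_{O_E}\equiv\beta_k^i\pmod{\pi}$, and in this basis $\rho|_{G_{F_{\mathbf{p}_i}}}$ is upper triangular with the prescribed inertial character $\delta_{\mathbf{p}_i}([u,F_{\mathbf{p}}])=\varepsilon_{1,\mathbf{p}}(u)u^{-k_{1,\mathbf{p}}}$, while the total determinant is $\psi=\varepsilon_+\mathcal{N}^n$ by hypothesis (1). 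Hence $(\rho|_{G_{F_{\mathbf{p}_i}}},\beta^i_{O_E})$ defines a point of $F_{\mathbf{p}_i}^{\vartriangle,\psi,s}(O_E)$, and assembling these local data with the global $\rho$ yields a $W$-algebra map $\mathcal{R}^{\vartriangle}\to O_E$. Theorem \ref{Thm5.2} identifies this with a map $T^{\vartriangle}\to O_E$. Since $T^{\vartriangle}$ is a power series ring over $T$, restricting along the canonical inclusion $T\hookrightarrow T^{\vartriangle}$ gives a $W$-algebra homomorphism $\lambda\colon T\to O_E$. Under the perfect pairing $h_k(N,\varepsilon;-)\times S_k(N,\varepsilon;-)\to-$, such $\lambda$ corresponds to a Hecke eigenform $g\in S_k(N,\varepsilon;O_E)$ of weight $k$. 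Its attached Galois representation $\rho_g$ is the push-forward of $\rho_T$ along $\lambda$, and tracing the identifications shows $\rho_g\sim\rho$, so $\rho$ is modular of weight $k$.

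\textbf{Main obstacle.} The heavy lifting is already done by the $R=T$ theorem of Section \ref{sec5}; the new work in Theorem \ref{Thm6.1} is the conceptually simple but bookkeeping-heavy verification that the hypotheses on $\rho$ translate exactly into a point of $\mathcal{R}^{\vartriangle}(O_E)$. The most delicate hypothesis is (2): the requirement $M_i\equiv\overline{M}_i\pmod{\pi}$ is precisely what is needed so that $M_i\beta_0$ is an admissible framing lifting $\beta_k^i$; without this compatibility with the basis implicitly chosen in defining $\mathcal{R}^{\vartriangle}$, the representation $\rho$ would not correspond to any $O_E$-point of the framed functor. A secondary subtlety is to check that the map $T^{\vartriangle}\to O_E$ really restricts through the Hecke factor $T$ (not merely some extension of it): this is where one uses that the extra power-series variables in $T^{\vartriangle}$ parametrize only the choice of framing and are killed upon restricting to $T$, so that $\lambda$ and hence $\rho_g$ depend only on $\rho$ and the initial choice of $\beta_T^i$.
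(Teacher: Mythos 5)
Your Step 2 reproduces the paper's core mechanism for the key special case: take $\beta^i_{O_E}=M_i\beta_0$, use $M_i\equiv\overline{M}_i\pmod{\pi}$ to see this is an admissible framing, obtain a $W$-algebra map $\mathcal{R}^{\vartriangle}\to O_E$, transport it through $\mathcal{R}^{\vartriangle}\cong T^{\vartriangle}$, and read off an eigenform of weight $k$ via the duality between $T$ and the space of cusp forms. That part matches the paper.

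The genuine gap is in Step 1, where you assert that hypothesis (5) ``ensures that the deformation theory developed in Sections \ref{sec3}--\ref{sec4} applies to $\rho$ at primes in $S\setminus\Sigma$.'' It does not. The isomorphism $\mathcal{R}^{\vartriangle}\cong T^{\vartriangle}$ of Theorem \ref{Thm5.2} is established under the running hypotheses of Section \ref{sec3}, in particular hypothesis (4) there, namely $(1-N(v))[(1+N(v))^2\det\overline{\rho}(\mathrm{Frob}_v)-N(v)(\mathrm{tr}\,\overline{\rho}(\mathrm{Frob}_v))^2]\in k^{\times}$ for every $v\in S\setminus\Sigma$; nothing in the hypotheses of Theorem \ref{Thm6.1} guarantees this, and without it (or some minimal/level-raising condition at such $v$) the framed global deformation ring sees liftings whose ramification outside $p$ is not captured by the Hecke algebra of level $N$, so the surjection $\mathcal{R}^{\vartriangle}\to T^{\vartriangle}$ need not be an isomorphism for your choice of $S$ and $N$. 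Moreover, the given $\rho$ may be ramified at $v\nmid p$ in a way incompatible with the level of $f$. The paper's actual use of hypothesis (5) is different and essential: since $\rho|_{G_v}$ is not special for $v\nmid p$, it is either principal series or induced from a quadratic extension, hence becomes unramified after a solvable base change to a totally real $F'$. The paper therefore first proves the theorem under the extra assumption $(*)$ that $f$ has level $N=p$ (so $S=\Sigma$ and the problematic condition at $v\in S\setminus\Sigma$ is vacuous), and then reduces the general case to $(*)$ by Skinner--Wiles base change and descent. Your argument needs this reduction; as written it only establishes the theorem when $\rho$ is unramified outside $p$ and the auxiliary condition at the remaining bad primes happens to hold.
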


\begin{remark}
The deformation problem characterized by the local conditions (2)
and (3) is not covered by Fujiwara's work \cite{Fuj}, which he
assumes the distinguished condition on nearly ordinary primes. We
make the additional description of the matrices $M_i$'s in (2),
because under the situation (3), unlike the situation in
\cite{Fuj} when (ds) holds, only assuming $\rho$ lifting
$\overline{\rho}$ is not enough to make sure that $\rho$ is in the
category parameterized by our (framed) deformation rings.
\end{remark}

\begin{remark}
The condition (5) can be removed either by consider the minimal
lifting problem at such $v$ as Fujiwara\cite{Fuj}, or by
introducing automorphic forms on a suitable quaternion algebra as
in Kisin\cite{Kis}. Here we keep to assume it in order to make the
argument shorter.
\end{remark}

\begin{remark}
Under the circumstance
$\rho|_{G_{\mathbf{p}_i}}=\begin{pmatrix}\chi_{\mathbf{p}_i}&0\\0&
\chi_{\mathbf{p}_i}\end{pmatrix}$ for all $i$, the hypothesis on
the existence of $M_i$ is automatic. The locally split
representation is conjecturally to be CM by Greenberg, which is
proved by Ghate and Hida \cite{GH} under the assumption
$F=\mathbb{Q}$ and the Distinguishedness condition. Thus our
deformation covered the (non-distinguished) CM case as an example.
\end{remark}

\begin{proof}
Enlarge the coefficient field $E$ and the ring $W$ suitably so
that we may assume $W=O_E$. We first prove the theorem assuming
the following extra condition

(*)\hskip 0.5cm $f\in S_{k}(p,\varepsilon,W)$ for the level $N=p$
and the ``Neben''
$\varepsilon=(\varepsilon_1,\varepsilon_2,\varepsilon_+)$.

Let $m$ be the maximal ideal of $h_{k}(p,\varepsilon,W)$
associated to $f$. The local ring $T=h_{k}(p,\varepsilon,W)_m$ is
in the nearly ordinary part of $h_{k}(p,\varepsilon,W)$ and thus
reduced and generated by $T_{\ell}$ for $\ell\nmid p$. Take the
set $S$ of bad primes to be those above $p$ and the character
$\psi=\varepsilon_+\omega^n$.

Associated to each $\textrm{p}_i$ the basis
$\beta_E^i=M_i\cdot(e_1,e_2)$ of $O_E^2$, where $e_1=^t(1,0)$ and
$e_2=^t(0,1)$. The basis $\beta_E^i$ congruent to the universal
basis $\beta^i$ in section \ref{sec4}, so the pair
$(\rho,\beta_E^i)$ gives a homomorphism
$$R_{F,S}^{\square,\psi}\rightarrow O_E$$
by the universality, which factor through the map
$$\mathcal{R}^{\vartriangle}=R_{F,S}^{\square,\psi}
\widehat{\otimes}_{\mathcal{R}_{p}^{\square,\psi}}\mathcal{R}_{p}^{\vartriangle,\psi,s}
\rightarrow O_E,$$ since $\rho|_{G_{\textrm{p}_i}}$ is upper
triangular under the basis $\beta_E^i$. So we have a homomorphism
$$h_{k}(p,\varepsilon,W)_m=T\hookrightarrow T^{\vartriangle}\cong
\mathcal{R}^{\vartriangle}\rightarrow W$$ which, by the duality of
Hecke algebra and modular form, implies that $\rho$ is modular.

In the general case, apply the base change technic developed by
Skinner-Wiles \cite{SW}, (see also (3.5) of \cite{Kis}). As we
assumed that $\rho$ is non special for all primes $\ell$ outside
$p$, the only possibilities for $\rho|_{G_{\ell}}$ are either
principle or induced from a character of a quadratic extension of
$F$. Both of these two cases will become unramified after a
suitable base change to a totally real field $F'$. For the primes
in the level above $p$, we can put any powers on them as the Hecke
algebra is nearly ordinary, thus, this reduce to the case (*). The
local descriptions (2) and (3) of $\overline{\rho}$ are obviously
preserved under restriction to a subgroup. Then the above argument
implies $\rho|_{G_{F'}}$ is modular, so $\rho$ is modular by
descent.
\end{proof}

\section{Locally cyclotomic deformation} \label{sec7}
In this section, we will prove a family version of the lifting
theorem. Assume the initial weight $k_0=(0,I)$ in this section and
we will write $\varepsilon_0$ for the inial ``Neben'' instead of
using the letter $\varepsilon$. As before, we again assume that
$\rho:G_{F,S}\rightarrow GL_2(W)$ is a representation satisfies
all the assumptions of theorem \ref{Thm6.1} for the weight $k=k_0$
and ``Neben'' $\varepsilon=\varepsilon_0$.

We consider a new local framed deformation functor
$\Phi_{cyc,\textbf{p}}^{\vartriangle,\phi,s}$ sending a ring $A$
in the category $CNL_W$ to the set of equivalence classes of pairs
$(\rho_A,\beta_A)$ consisting of a deformation
$\rho_A:G_{F_{\textbf{p}}}\rightarrow GL_2(A)$ of
$\overline{\rho}|_{G_{F_{\textbf{p}}}}$ with fixed determinant
$\psi$, and an $A$-basis $\beta_A$ lifting the chosen $\beta_k$,
under which $\rho_A$ is given by
$\begin{pmatrix}\chi_{1,\textbf{p}}&*\\0&\chi_{2,\textbf{p}}\end{pmatrix}$
for some characters $\chi_{1,\textbf{p}}$ and
$\chi_{2,\textbf{p}}$ of $G_{F_{\textbf{p}}}$ such that the
character
$\chi_{2,\textbf{p}}|_{I_{\textbf{p}}}\varepsilon_{1,\textbf{p}}^{-1}$
factor through
$\textrm{Gal}(F_{\textbf{p}}^{ur}(\mu_{p^{\infty}})/F_{\textbf{p}}^{ur})$,
where $F_{\textbf{p}}^{ur}$ is the maximal unramified extension of
$F_{\textbf{p}}$. Two pairs
$(\rho_A,\beta_A)\thicksim(\rho'_A,\beta'_A)$ are equivalent, if
there exist an upper triangular matrix
$T\equiv\textrm{id}_2(\bmod~m_A)$, such that
$T\rho_AT^{-1}=\rho'_A$ and $T\beta_A=\beta'_A$. Notice that the
only difference between the above definition and the functor
$F_{\textbf{p}}^{\vartriangle,\phi,s}$ studied in section
\ref{sec4} is that in the latter functor, we require that
$\chi_{2,\textbf{p}}|_{I_{\textbf{p}}}\varepsilon_{1,\textbf{p}}^{-1}$
is trivial (remember that we have fix the initial weight
$k=k_0=(0,I)$).

The tangent dimension of the local cyclotomic deformation functor
$\Phi_{cyc,\textbf{p}}^{\vartriangle,\phi,s}$ is equal to
$3+[F_{\textbf{p}}:\mathbb{Q}_p]$, one more than that of
$F_{\textbf{p}}^{\vartriangle,\phi,s}$. Again by checking the
Schlessinger criterions,
$\Phi_{cyc,\textbf{p}}^{\vartriangle,\phi,s}$ is also
representable by some complete local noetherian $W$-algebra
$\mathcal{R}_{cyc,\textbf{p}}^{\vartriangle,\phi,s}$.

Write $\Gamma_{\textbf{p}}$ for the $p$-Sylow subgroup of
$\textrm{Gal}(F_{\textbf{p}}^{ur}(\mu_{p^{\infty}})/F_{\textbf{p}}^{ur})$,
which is embedded into $1+p\mathbb{Z}_p\subset\mathbb{Z}_p^\times$
via the $p$-adic cyclotomic character. It is isomorphic to
$\mathbb{Z}_p$ by choosing some generator $\gamma_{\textbf{p}}$.
Let $\Gamma_F=\prod_{\textbf{p}|p}\Gamma_{\textbf{p}}$. We then
have an isomorphism $W[[\Gamma_F]]\cong W[[X_1,X_2,\cdots,X_r]]$
by sending $\gamma_{\textbf{p}_i}$ to $1+X_i$, where we ordered
the primes $\textbf{p}$'s as in section \ref{sec3}.

The universal representation
$\rho_{cyc,\textbf{p}}:G_{F_{\textbf{p}}}\rightarrow
GL_2(\mathcal{R}_{cyc,\textbf{p}}^{\vartriangle,\phi,s})
\cong\begin{pmatrix}\chi_{1,\textbf{p}}&*\\0&\chi_{2,\textbf{p}}\end{pmatrix}$
with $\chi_{2,\textbf{p}}\equiv\overline{\chi}_{\textbf{p}}$
provide that $\chi_{2,\textbf{p}}\varepsilon_{1,\textbf{p}}^{-1}$
factors through $\Gamma_{\textbf{p}}$ and we get a
$W[[\Gamma_\textbf{p}]]$-algebra structure on
$\mathcal{R}_{cyc,\textbf{p}}^{\vartriangle,\phi,s}$ via the
character $
\chi_{2,\textbf{p}}|_{I_{\textbf{p}}}\varepsilon_{1,\textbf{p}}^{-1}:
\Gamma_F\rightarrow\mathcal{R}_{cyc,\textbf{p}}^{\vartriangle,\phi,s}$.
Define $\mathcal{R}_{p,cyc}^{\vartriangle,\psi,s}:
=\widehat{\otimes}_{\textbf{p}|p}\mathcal{R}_{cyc,\textbf{p}}^{\vartriangle,\phi,s}$.
$\mathcal{R}_{p,cyc}^{\vartriangle,\psi,s}$ is a
$W[[\Gamma_F]]$-algebra and there is an isomorphism
$\mathcal{R}_{cyc,p}^{\vartriangle,\phi,s}/(X_1,\cdots,X_r)
\cong\mathcal{R}_{p}^{\vartriangle,\phi,s}$.

\vskip 0.5cm

Identify $\widehat{\Gamma}_0(p^s)/\widehat{\Gamma}_1^1(p^s)$ with
$((\mathcal{O}/p^s\mathcal{O})^2)^{\times}$ by sending
$\begin{pmatrix}a&b\\c&d\end{pmatrix}\in\widehat{\Gamma}_0(p^s)/\widehat{\Gamma}_1^1(p^s)$
to $(a,d)\in((\mathcal{O}/p^s\mathcal{O})^2)^{\times}$. We have an
isomorphism $(\mathcal{O}/p^s\mathcal{O})^{\times}\cong
\prod_{\textbf{p}|p}(\mathcal{O}_{\textbf{p}}/p^s\mathcal{O}_{\textbf{p}})^{\times}$
and the local norm maps
$N_{\textbf{p}}:\mathcal{O}_{\textbf{p}}^{\times}\rightarrow
\mathbb{Z}_p^{\times}$ form a surjective homomorphism
$N_p=\prod_{\textbf{p}|p}N_{\textbf{p}}:
(\mathcal{O}_{\textbf{p}}/p^s\mathcal{O}_{\textbf{p}})^{\times}\rightarrow
\prod_{\textbf{p}|p}N_{\textbf{p}}(\mathbb{Z}/p^s\mathbb{Z})^{\times}$
for each integer $s>0$. Let $\widehat{\Gamma}_{cyc}(p^s)$ be the
subgroup of $GL_2(\mathbb{A}_F^f)$ such that
$\widehat{\Gamma}_1^1(p^s)\subset\widehat{\Gamma}_{cyc}(p^s)\subset\widehat{\Gamma}_0(p^s)$
and
$$\widehat{\Gamma}_{cyc}(p^s)/\widehat{\Gamma}_1^1(p^s)=\textrm{Ker}(N_p^2:
((\mathcal{O}/p^s\mathcal{O})^2)^{\times})\rightarrow
(\prod_{\textbf{p}|p}(\mathcal{O}_{\textbf{p}}/p^s\mathcal{O}_{\textbf{p}})^2)^{\times}.$$
Put
$\widehat{\Gamma}_s=\widehat{\Gamma}_{cyc}(p^s)\cap\widehat{\Gamma}_0(N)$,
then there is an inclusion
$S_{k_0}(\widehat{\Gamma}_n,\varepsilon_0,W)\rightarrow
S_{k_0}(\widehat{\Gamma}_m,\varepsilon_0,W)$ for each pair of
positive integers $m>n$, which is compatible with the Hecke
algebras action. Thus there is a surjective $W$-algebra
homomorphism
$S_{k_0}(\widehat{\Gamma}_m,\varepsilon_0,W)\rightarrow
S_{k_0}(\widehat{\Gamma}_m,\varepsilon_0,W)$ by restriction, and
we define the projective limit
$$\mathbf{h}_{cyc}^{n.ord}(N,\varepsilon_0;W[[\Gamma_F]]):
=\underleftarrow{\lim}_nh_{k_0}^{n.ord}(\widehat{\Gamma}_n,\varepsilon_0;W),$$
where for any level group $\widehat{\Gamma}$ and "Neben"
$\varepsilon$, the nearly ordinary Hecke algebra is defined by
$h_{k_0}^{n.ord}(\widehat{\Gamma},\varepsilon;W):=e_ph_{k_0}^{n.ord}
(\widehat{\Gamma},\varepsilon;W)$
for Hida's idempotent
$$e_p=\lim_{n\rightarrow\infty}(\prod_{\textbf{p}|p}U(\textbf{p}))^{n!}$$
where $U(\textbf{p})$ is the Hecke operator normalized as
\cite{HMF} section 3.1.2, page 168.

It's known that $h_{k_0}^{n.ord}(\widehat{\Gamma},\varepsilon;W)$
is a torsion free $W[[\Gamma_F]]$-module of finite type
(\cite{HMF},theorem 3.53), so
$\mathbf{h}_{cyc}^{n.ord}(N,\varepsilon;W[[\Gamma_F]])$ is a
$p$-profinite semilocal ring, i.e, a direct sum of the
localizations at its maximal ideals. Let $f_0$ be an Hecke eigen
form in $S_{k_0}(Np,\varepsilon,W)$ such that the associated
Galois representation $\rho_{f_0}$ modulo $m_W$ equal to
$\overline{\rho}$. Let $m$ be the maximal ideal of
$\mathbf{h}_{cyc}^{n.ord}(N,\varepsilon;W[[\Gamma_F]])$ given by
$f_0$, and denote by $T_{cyc}$ the localization
$\mathbf{h}_{cyc}^{n.ord}(N,\varepsilon;W[[\Gamma_F]])_m$. Then
there is Galois representation(\cite{HMF},Proposition 3.49)
$$\rho_{T_{cyc}}:\textrm{Gal}(\overline{F}/F)\rightarrow
GL_2(T_{cyc})$$ unramified outside $pN$ with the following
properties:

(1)$\textrm{tr}(\rho_{T_{cyc}}(Frob_\ell))=T_{\ell}$ for
$\ell\nmid pN$.

(2)$\det\rho_{T_{cyc}}=\varepsilon_+\mathcal{N}$.

(3)For each $\textbf{p}_i|p$, there is a basis $\beta_{T_{cyc}}^i$
of $T_{cyc}^2$, such that the pair
$(\rho_{T_{cyc}|_{G_{F_{\textbf{p}}}}},\beta_{T_{cyc}})$ is in
$\Phi_{cyc,\textbf{p}}^{\vartriangle,\phi,s}(T_{cyc})$, where we
take the character $\phi=\varepsilon_+\mathcal{N}$.

Define
$T_{cyc}^{\vartriangle}:=T_{cyc}[[w_1,\cdots,w_{4r-1}]]/(w_{4i-1},i=1,\cdots,r)$
and define
$\beta_{T_{cyc}^{\vartriangle}}^i=\begin{pmatrix}1+w_{4i-3}&w_{4i-2}\\
0&1+w_{4i}\end{pmatrix}\cdot\beta_{T_{cyc}}^i$, then the pair
$(\rho_{T_{cyc}^{\vartriangle}},\beta_{T_{cyc}^{\vartriangle}}^i)_{i=1,\cdots,r}$,
where $\rho_{T_{cyc}^{\vartriangle}}$ is the composition of
$\rho_{T_{cyc}}$ and the natural inclusion $T_{cyc}\hookrightarrow
T_{cyc}^{\vartriangle}$, gives a surjective homomorphism
$$\pi:\mathcal{R}_{cyc}^{\vartriangle}=R_{F,S}^{\square,\psi}
\widehat{\otimes}_{\mathcal{R}_{p}^{\square,\psi}}
\mathcal{R}_{cyc,p}^{\vartriangle,\psi,s}
\rightarrow T_{cyc}^{\vartriangle},$$ which we will prove to be an
isomorphism in the following theorem.

\vskip 0.5cm

Write $(k,\varepsilon)$ for $k\in\mathbb{Z}[I]^2$ and
$\varepsilon$ an arbitrary pairs of weight and "Neben", such that
$k_{i,\sigma}=k_{i,\sigma'}$ whenever both $\sigma,\sigma'\in
I_{\textbf{p}}$, which we will denote by $k_{i,\textbf{p}}$.
Define a prime ideal $P_{k,\varepsilon}\subset W[[\Gamma]]$ to be
the ideal generated by
$(\chi_{2,\textbf{p}}(\gamma)-\omega(\gamma)^{-k_{1,\textbf{p}}}\varepsilon_1(\gamma))$
for $\textbf{p}|p$ and $\gamma\in\Gamma_{\textbf{p}}$.

\begin{theorem}
The surjective $W[[\Gamma_F]]$-algebra homomorphism
$\mathcal{R}_{cyc}^{\vartriangle}\rightarrow
T_{cyc}^{\vartriangle}$ is an isomorphism.
$\mathcal{R}_{cyc}^{\vartriangle}$ is free of finite rank over
$W[[\Gamma_F]][[w_1,\cdots,w_{4r-1}]]/(w_{4i-1},i=1,\cdots,r)$.
For any locally cyclotomic $(k,\varepsilon)$ such that

(1)\hskip 0.5cm $k_2-k_1\geqslant I$,

(2)\hskip 0.5cm
$z_{\mathbf{p}}^{-k_{j,\mathbf{p}}}\varepsilon_{j}(z_{\mathbf{p}})
=z_{\mathbf{p}}^{-k_{0,j,\mathbf{p}}}\varepsilon_{0,j}(z_{\mathbf{p}})$,

(3)\hskip 0.5cm $\varepsilon_+=\varepsilon_{0,+}$ and
$\varepsilon_j|_{\mathcal{O}_\ell^{\times}}=\varepsilon_{0,j}|_{\mathcal{O}_\ell^{\times}}$
for $j=1,2$,

(4)\hskip 0.5cm
$\mathfrak{c}(\varepsilon_{0,j})|\mathfrak{c}(\varepsilon_j)|p^\infty\mathfrak{c}
(\varepsilon_{0,j})$ for $j=1,2$.

Then there is a unique local factor $T_{k,\varepsilon}\subset
h_{k}^{n.ord}(N\cap\mathfrak{c}(\varepsilon^-),\varepsilon;W)$
such that we have the isomorphisms
$\mathcal{R}_{cyc}^{\vartriangle}/P_{k,\varepsilon}\mathcal{R}_{cyc}^{\vartriangle}\cong
T_{k,\varepsilon}^{\vartriangle}$ induced by $\pi$.
\end{theorem}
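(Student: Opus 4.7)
The plan is to promote the proof of Theorem~\ref{Thm5.2} to the cyclotomic family by running the Taylor-Wiles patching argument over $W[[\Gamma_F]]$, and then deduce the specialization statement from Hida's vertical control theorem. On the deformation side, for each $n\geq 1$ and the prime set $Q_n$ supplied by Kisin's proposition, I would set
\[
\mathcal{R}_{cyc,n}^{\vartriangle} := \mathcal{R}_{F,S_{Q_n}}^{\square,\psi}\widehat{\otimes}_{\mathcal{R}_{p}^{\square,\psi}}\mathcal{R}_{cyc,p}^{\vartriangle,\psi,s}.
\]
On the automorphic side, I build $T_{cyc,n}^{\vartriangle}$ from the big nearly ordinary Hecke algebra at auxiliary level $N_{Q_n}$, localized at the maximal ideal $\mathfrak{m}_{Q_n}$ above $m$ and framed exactly as in Section~\ref{sec4}. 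Hida's vertical control theorem (\cite{HMF}, Theorem~3.53 and Corollary~3.55) shows that $T_{cyc,n}^{\vartriangle}$ is free of finite constant rank over $W[[\Gamma_F]][\Delta_{Q_n}][[w_1,\ldots,w_{4r-1}]]/(w_{4i-1})$, and that its specialization along $P_{k_0,\varepsilon_0}$ recovers the weight-$k_0$ object $T_n^{\vartriangle}$ of Section~\ref{sec4}.

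Next I would apply Proposition~\ref{Thm5.1} with
\[
B = \mathcal{R}_{p,cyc}^{\vartriangle,\psi,s}, \qquad \Lambda = W[[\Gamma_F]][[y_1,\ldots,y_{h^1},t_1,\ldots,t_{j}]],
\]
where the $y_i$ carry the $\Delta_{Q_n}$-action, the $t_i$ are the free framing variables surviving from Section~\ref{sec5}, and the $W[[\Gamma_F]]$-part contributes the cyclotomic variables. The tangent dimension of $B$ over $W$ increases by $r$ compared with $\mathcal{R}_{p}^{\vartriangle,\psi,s}$, but $\Lambda$ also gains $r$ variables from $W[[\Gamma_F]]$, so the relative generation count $h+j-d$ in Proposition~\ref{Thm5.1} is unchanged and the bound on generators of $\mathcal{R}_{cyc,n}^{\vartriangle}$ over $B$ follows from the same Galois-cohomology computation as in Kisin's proposition. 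Condition~(3) of Proposition~\ref{Thm5.1} is the control theorem just cited, and condition~(2) is the identification $\mathcal{R}_{cyc,n}^{\vartriangle}/(y_i-1)\cong \mathcal{R}_{cyc}^{\vartriangle}$, $T_{cyc,n}^{\vartriangle}/(y_i-1)\cong T_{cyc}^{\vartriangle}$. The conclusion of Proposition~\ref{Thm5.1} then yields $\pi\colon\mathcal{R}_{cyc}^{\vartriangle}\xrightarrow{\sim}T_{cyc}^{\vartriangle}$ together with freeness of $\mathcal{R}_{cyc}^{\vartriangle}$ over $W[[\Gamma_F]][[w_1,\ldots,w_{4r-1}]]/(w_{4i-1})$.

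For the specialization statement, observe that $P_{k,\varepsilon}$ cuts out exactly the locally cyclotomic deformations whose diagonal inertia character on each $\mathbf{p}|p$ equals $u\mapsto \varepsilon_{1,\mathbf{p}}(u)u^{-k_{1,\mathbf{p}}}$. By the universal property of $\mathcal{R}_{cyc}^{\vartriangle}$, the quotient $\mathcal{R}_{cyc}^{\vartriangle}/P_{k,\varepsilon}\mathcal{R}_{cyc}^{\vartriangle}$ represents the framed deformation functor of Section~\ref{sec4} at weight $k$ and Neben $\varepsilon$, and hypotheses (1)--(4) on $(k,\varepsilon)$ are precisely what is needed for Hida's control theorem to identify $T_{cyc}/P_{k,\varepsilon}T_{cyc}$ with a unique local factor $T_{k,\varepsilon}$ of $h_{k}^{n.ord}(N\cap\mathfrak{c}(\varepsilon^-),\varepsilon;W)$. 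Reducing $\pi$ modulo $P_{k,\varepsilon}$ and passing to the framed versions on both sides then yields the desired isomorphism $\mathcal{R}_{cyc}^{\vartriangle}/P_{k,\varepsilon}\mathcal{R}_{cyc}^{\vartriangle}\cong T_{k,\varepsilon}^{\vartriangle}$.

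The main obstacle is establishing freeness of $T_{cyc,n}^{\vartriangle}$ over $\Lambda/\mathfrak{s}_n$ at each Taylor-Wiles level: this demands a careful compatibility between Hida's control in the cyclotomic direction and the $\Delta_{Q_n}$-action coming from the auxiliary level structure of Section~\ref{sec2}, so that the specialization maps along both $(y_i-1)$ and $P_{k_0,\varepsilon_0}$ behave flatly. Once this is in hand, the Galois-cohomology bound on generators of $\mathcal{R}_{cyc,n}^{\vartriangle}$ over $B$ is formally identical to the fixed-weight case of Section~\ref{sec5}, and the remainder of the proof proceeds mechanically from Proposition~\ref{Thm5.1}.
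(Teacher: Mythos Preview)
Your argument is correct in outline, but it takes a genuinely different route from the paper. You propose to \emph{re-run} the Taylor--Wiles patching of Proposition~\ref{Thm5.1} over the larger base $W[[\Gamma_F]]$, building cyclotomic-family versions $\mathcal{R}_{cyc,n}^{\vartriangle}$ and $T_{cyc,n}^{\vartriangle}$ and checking the freeness hypothesis~(3) level by level via Hida's control theorem combined with the $\Delta_{Q_n}$-freeness. This works, and is in fact the approach taken in some of the later literature, but it requires exactly the compatibility you flag as the ``main obstacle'': one must verify that $T_{cyc,n}$ is free over $W[[\Gamma_F]][\Delta_{Q_n}]$, which mixes vertical (cyclotomic) control with horizontal (auxiliary-level) freeness.

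The paper instead \emph{bootstraps} from the single-weight result already proved in Theorem~\ref{Thm5.2}, and never re-patches. The argument is a pure Nakayama/rank comparison: one knows a priori that $T_{cyc}$ is finite torsion-free over $W[[\Gamma_F]]$ and that $T_{cyc}/P_{k_0,\varepsilon_0}T_{cyc}\cong T$ is free of some rank $a$ over $W$; Nakayama then gives a surjection $W[[\Gamma_F]]^a\twoheadrightarrow T_{cyc}$, and torsion-freeness forces it to be an isomorphism, so $T_{cyc}^{\vartriangle}$ is free of rank $a$ over the framed Iwasawa base. On the Galois side, the identification $\mathcal{R}_{cyc}^{\vartriangle}/P_{k_0,\varepsilon_0}\cong \mathcal{R}^{\vartriangle}\cong T^{\vartriangle}$ from Theorem~\ref{Thm5.2} shows (again by Nakayama) that $\mathcal{R}_{cyc}^{\vartriangle}$ is generated by $a$ elements over the same base; since it surjects onto a free module of rank $a$, the surjection is an isomorphism. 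The specialization statement then follows exactly as you describe.

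The trade-off is clear: your approach is self-contained and yields freeness as a direct output of patching, at the cost of a more delicate freeness verification at each auxiliary level; the paper's approach is shorter and uses only the torsion-freeness of $T_{cyc}$ over $W[[\Gamma_F]]$ (imported from \cite{HMF}) together with the fixed-weight $R^{\vartriangle}=T^{\vartriangle}$ as a black box.
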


\begin{proof}
$\mathcal{R}_{cyc,\textbf{p}}^{\phi,\vartriangle,s}/P_{k,\varepsilon}
\mathcal{R}_{cyc,\textbf{p}}^{\phi,\vartriangle,s}$
is the maximal quotient of
$\mathcal{R}_{cyc,\textbf{p}}^{\phi,\vartriangle,s}$ on which
$\delta_{\textbf{p}}$ induces $[u,F_{\textbf{p}}]\rightarrow
N_{\textbf{p}}^{-k_{1,\textbf{p}}}\varepsilon_{1,\textbf{p}}(u)$
for $u\in \mathcal{O}_{\textbf{p}}$. Thus
$\mathcal{R}_{cyc,\textbf{p}}^{\phi,\vartriangle,s}/
P_{k,\varepsilon}\mathcal{R}_{cyc,\textbf{p}}^{\phi,\vartriangle,s}$
is the universal framed deformation ring of
$\overline{\rho}|_{G_{F_{\textbf{p}}}}$ for the deformations of
type $(k,\varepsilon)$ instead of $(k_0,\varepsilon_0)$. The
previous proved proposition \ref{Thm5.1} states that
$\mathcal{R}_{cyc}^{\vartriangle}/P_{k_0,\varepsilon_0}\mathcal{R}_{cyc}^{\vartriangle}
\cong\mathcal{R}^{\vartriangle}\cong T^{\vartriangle}$. Write $a$
for the rank of $T$ over $W$, there is a surjection
$W[[\Gamma_F]]^a\rightarrow T_{cyc}$ which induces $W^a\cong T$ by
modulo $P_{k_0,\varepsilon_0}$ by Nakayama's lemma. Since
$T_{cyc}$ is torsion free $W[[\Gamma_F]]$-module, the kernel must
be trivial, and we have $T_{cyc}\cong W[[\Gamma_F]]^a$. As
$\mathcal{R}_{cyc}^{\vartriangle}/P_{k_0,\varepsilon_0}\mathcal{R}_{cyc}^{\vartriangle}
\cong T^{\vartriangle}$, $\mathcal{R}_{cyc}^{\vartriangle}$ is
generated by at most $a$ elements as a
$W[[\Gamma_F]][[w_1,\cdots,w_{4r-1}]]/(w_{4i-1},i=1,\cdots,r)$-module.
In addition, $\mathcal{R}_{cyc}^{\vartriangle}$ surjectively cover
the rank $a$ free
$W[[\Gamma_F]][[w_1,\cdots,w_{4r-1}]]/(w_{4i-1},i=1,\cdots,r)$
module $T_{cyc}^{\vartriangle}$, we must have
$\mathcal{R}_{cyc}^{\vartriangle}\cong T_{cyc}^{\vartriangle}$.
\end{proof}

\vskip 0.5cm

\section{Characteristic zero deformation and $\mathcal{L}$-invariant}
\label{sec8}

There are many different ways to define $\mathcal{L}$-invariant,
which is expected to measure the difference between $p$-adic
$L$-function at an exceptional zero and the archimedean
$L$-function. The $\mathcal{L}$-invariant of an elliptic curve
$E_{/\mathbb{Q}}$ is studied by Mazur-Tate-Teitelbaum \cite{MTT}
and Greenberg-Stevens \cite{GS}. Hida studied the $L$-invariants
of Tate curves \cite{H07a} \cite{H07b} and make a vast
generalization of Mazur-Tate-Teitelbaum conjecture to the
symmetric powers of Galois representations associated to Hilbert
modular forms \cite{H07d}, which gave explicit predictions of the
$\mathcal{L}$-invariants of these representations. Hida proved
certain cases of this conjecture, assuming a conjectural shape of
certain universal deformation rings over characteristic zero
fields.

Let $K$ be the fraction field of $W$ and $\rho:G_{F,S}\rightarrow
GL_2(W)\rightarrow GL_2(K)$ is the representation associated to
some nearly ordinary Hilbert modular form $f$. We further assume
that $\epsilon_{\textbf{p}}\neq\delta_{\textbf{p}}$ for all
$\textbf{p}|p$, i.e, the two characters $\epsilon_{\textbf{p}}$
and $\delta_{\textbf{p}}$ of $G_{F_{\textbf{p}}}$ are distinct
over $W$ for each $\textbf{p}$, but become equal after modulo
$m_W$.

In \cite{HMF} section 3.2.9, Hida considered the following type of
deformation functor $\Phi_K$ from $ART_K$, the category of
artinian local $K$-algebra with residue field $K$, to the category
of sets, such that for each $A$ in $ART_K$, $\Phi_K(A)$ is the set
of isomorphism classes of deformations $\rho_A:G_{F,S}\rightarrow
GL_2(A)$ of $\rho_0$ satisfying the additional conditions:

(1) $\det(\rho_A)=\varepsilon_+\mathcal{N}$.

(2) $\rho_A|_{G_{F_{\textbf{p}}}}\thicksim
\begin{pmatrix}\epsilon_{A,\textbf{p}}&*\\0&\delta_{A,\textbf{p}}\end{pmatrix}$
for characters
$\epsilon_{A,\textbf{p}},\delta_{A,\textbf{p}}:D_{\textbf{p}}\rightarrow
A^{\times}$, such that $\delta_{A,\textbf{p}}\cong
\delta_{\textbf{p}}(\bmod~m_A)$ and
$\delta_{A,\textbf{p}}\delta_{\textbf{p}}^{-1}|_{I_{\textbf{p}}}$
factor through
$\mathrm{Gal}(F_{\textbf{p}}^{ur}(\mu_{p^\infty})/F_{\textbf{p}}^{ur})$
for all $\textbf{p}|p$.

Since $\rho$ is absolutely irreducible, the functor $\Phi_F^K(A)$
is pro-representable by a pro-artinian local $K$-algebra
$\mathcal{R}_K$ with residue field $K$. We denote the universal
representation $G_{F,S}\rightarrow GL_2(\mathcal{R}_K)$ by
$\rho_{\mathcal{R}_K}$. The restriction of
$\rho_{\mathcal{R}_K}|_{G_{F_{\mathbf{p}}}}$ is isomorphic to
$\begin{pmatrix}\epsilon_{\mathcal{R}_K,\mathbf{p}}&*\\0&
\delta_{\mathcal{R}_K,\mathbf{p}}\end{pmatrix}$.

Again we denote $\Gamma_{\textbf{p}}$ the $p$-Sylow subgroup of
$\textrm{Gal}(F_{\textbf{p}}^{ur}(\mu_{p^\infty})/F_{\textbf{p}})$,
which is isomorphic to a finite index subgroup of
$1+p\mathbb{Z}_p$. Denote by $\gamma_{\textbf{p}}$ the chose
generator as in the last section. The character
$\delta_{\mathcal{R}_K,\textbf{p}}\delta_{\textbf{p}}^{-1}$ make
the ring $\mathcal{R}_K$ into an $K[[t_{\textbf{p}}]]$-algebra, by
sending $(1+t_{\textbf{p}})$ to
$\delta_{\mathcal{R}_K,\textbf{p}}\delta_{\textbf{p}}^{-1}(\gamma_{\textbf{p}})$.
The ring $\mathcal{R}_K$ then become a
$K[[t_{\textbf{p}}]]|_{\textbf{p}|p}$-algebra.

\begin{conjecture}(Hida,\cite{H07d})The ring $\mathcal{R}_K$ is
isomorphic to $K[[t_{\mathbf{p}}]]|_{\mathbf{p}|p}$.
\end{conjecture}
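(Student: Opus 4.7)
The plan is to derive the conjecture from the locally cyclotomic $R=T$ theorem of Section~\ref{sec7} by localizing at the point corresponding to $\rho$ and base-changing from $W$ to $K$. By that theorem, there is an isomorphism $\mathcal{R}_{cyc}^{\vartriangle} \cong T_{cyc}^{\vartriangle}$ of $W[[\Gamma_F]][[w_1,\dots,w_{4r-1}]]/(w_{4i-1},\,i=1,\dots,r)$-algebras, and $T_{cyc}$ is free of rank $a:=\mathrm{rank}_W T$ over $W[[\Gamma_F]]$. Let $\mathfrak{p}$ be the kernel of the composition $T_{cyc}\twoheadrightarrow T \to W$ associated to the nearly ordinary eigenform $f_0$ giving rise to $\rho$.

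The first key step is to show that $\widehat{(T_{cyc}\otimes_W K)_{\mathfrak{p}}}$ is isomorphic to $K[[t_{\mathbf{p}}]]_{\mathbf{p}|p}$ as a $K[[\Gamma_F]]$-algebra. Finite flatness of $T_{cyc}$ over $W[[\Gamma_F]]$ is preserved under $\otimes_WK$, so by the fiber criterion it suffices to check that the fiber $T_{cyc}\otimes_{W[[\Gamma_F]]}K = T\otimes_WK$ over the closed point of $K[[\Gamma_F]]$ is \'etale at the image of $\mathfrak{p}$; this holds because $T$ is reduced, as noted in the proof of Theorem~\ref{Thm6.1}. Since $T_{cyc}^{\vartriangle}\cong T_{cyc}[[u_1,\dots,u_{3r-1}]]$, with the $u_i$ denoting the $\vartriangle$-framing variables that survive after killing the $w_{4i-1}$, this gives
\[
\widehat{(T_{cyc}^{\vartriangle}\otimes_W K)_{\mathfrak{p}}} \cong K[[t_{\mathbf{p}}, u_1,\dots,u_{3r-1}]]_{\mathbf{p}|p}.
\]

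Next I would identify $\widehat{(\mathcal{R}_{cyc}^{\vartriangle}\otimes_W K)_{\mathfrak{p}}}$ with $\mathcal{R}_K[[u_1,\dots,u_{3r-1}]]$. On the level of functors on $K$-artinian algebras, this completion pro-represents the Borel-framed analog of $\Phi_K$; absolute irreducibility of $\rho$ together with the distinguished condition $\epsilon_{\mathbf{p}}\neq\delta_{\mathbf{p}}$ over $K$ implies that the Borel framing is a torsor with trivial stabilizer modulo scalars, so exactly as in the proof of Proposition~\ref{Prop3.1} (with $GL_2$ replaced by its Borel) the framed deformation ring is a power series ring in $3r-1$ variables over the unframed one $\mathcal{R}_K$. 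Combining with the previous display yields
\[
\mathcal{R}_K[[u_1,\dots,u_{3r-1}]] \cong K[[t_{\mathbf{p}}, u_1,\dots,u_{3r-1}]]_{\mathbf{p}|p}
\]
as $K[[t_{\mathbf{p}}]]$-algebras, and quotienting by $(u_1,\dots,u_{3r-1})$ on each side gives $\mathcal{R}_K \cong K[[t_{\mathbf{p}}]]_{\mathbf{p}|p}$, which is the conjecture.

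The hard part will not be the cohomological dimension count but the two compatibilities needed to strip off the framing: first, that the $\Gamma_F$-parameter of the Hida family is identified, under the $R=T$ isomorphism of Section~\ref{sec7}, with the cyclotomic twist parameter $\delta_{\mathcal{R}_K,\mathbf{p}}\delta_{\mathbf{p}}^{-1}(\gamma_{\mathbf{p}})$ used to place the $K[[t_{\mathbf{p}}]]$-algebra structure on $\mathcal{R}_K$ in Section~\ref{sec8}; and second, that over $K$ the Borel-framed functor really differs from the unframed functor $\Phi_K$ by a formal power series extension in exactly $3r-1$ variables rather than merely a smooth one. Both reduce to careful tracking of the universal character $\chi_{2,\mathbf{p}}$ against $\delta_{A,\mathbf{p}}$ on inertia and to the scalar-versus-Borel stabilizer calculation under the distinguished condition, but neither is immediate from the black-box statement of Section~\ref{sec7}.
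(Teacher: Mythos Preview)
Your proposal is correct and follows essentially the same route as the paper: localize the Section~\ref{sec7} isomorphism $\mathcal{R}_{cyc}^{\vartriangle}\cong T_{cyc}^{\vartriangle}$ at the point given by $\rho$, use reducedness of $T$ to get \'etaleness of $T_{cyc}$ over $W[[\Gamma_F]]$ near that point (so the completed localization is $K[[t_{\mathbf p}]]_{\mathbf p|p}$), and then remove the framing variables to land on $\mathcal{R}_K$.

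The only organizational difference is in the deframing step. You argue that over $K$ the Borel-framed functor is a torsor for $B/\mathbb{G}_m$, so $\widehat{(\mathcal{R}_{cyc}^{\vartriangle}\otimes_W K)_{\mathfrak p}}\cong \mathcal{R}_K[[u_1,\dots,u_{3r-1}]]$, and then quotient. The paper instead quotients first, proving directly (Theorem~\ref{Thm8.2}) that $\widehat{\mathcal{R}}_P^{\vartriangle}/(w_1,\dots,w_{4r-1})$ pro-represents $\Phi_K$. The substantive content is the same, and the point you correctly flag as ``the hard part'' --- that the $K$-localization of an integral framed deformation ring really represents the corresponding $K$-functor --- is handled in the paper by a concrete lattice argument: given $\rho_A\in\Phi_K(A)$, choose a Galois-stable $W$-lattice $L\subset A^2$, set $A_0=A\cap\mathrm{End}_W(L)\in CNL_W$, rebuild $\rho_{A_0}$ from its trace by pseudo-representations, pick frames so the $w_i$ map to~$0$, and then invoke the integral universal property of $\mathcal{R}_{cyc}^{\vartriangle}$. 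This is exactly the missing technical input behind your step~5; once you supply it, your version and the paper's are interchangeable. Your torsor formulation has the advantage of making transparent why the distinguished condition $\epsilon_{\mathbf p}\neq\delta_{\mathbf p}$ over $K$ is needed (trivial Borel stabilizer modulo scalars), while the paper's formulation avoids having to check that the $R=T$ isomorphism matches the framing variables on the two sides.
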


In the case of representations associated to nearly ordinary
Hilbert modular forms with Distinguishedness (ds) condition, i.e,
if $\epsilon_{\textbf{p}}\neq\delta_{\textbf{p}}(\bmod~m_w)$,
Hida's conjecture can be deduced from the family version of
Fujiwara's $R=T$ theorem \cite{Fuj}, see Chapter 3 section 2 in
\cite{HMF}.

In this paper, as we always assume
$\epsilon_{\textbf{p}}\equiv\delta_{\textbf{p}}(\bmod~m_w)$,
Fujiwara's $R=T$ theorem fails, since the (ds) condition no long
hold and the universal deformation ring $R$ doesn't exist. On the
other hand, we have proved an isomorphism
$\mathcal{R}_{cyc}^{\vartriangle}\cong T_{cyc}^{\vartriangle}$ in
section \ref{sec7}. In this section, we will show that this
isomorphism between framed deformation ring and framed Hecke
algebra is enough to deduce Hida's conjecture, and thus get the
formula of $\mathcal{L}$-invariants.

For an arbitrary pro-Artinian local $W$-algebra $A$ and a
deformation $(\rho_A,\beta_A^1,\cdots,\beta_A^r)$ corresponding to
the homomorphism
$\theta:\mathcal{R}_{cyc}^{\vartriangle}\rightarrow A$, we can
modified the frames to $\beta_A^{'1},\cdots,\beta_A^{'r}$ such
that the kernel of the homomorphism
$\theta':\mathcal{R}_{cyc}^{\vartriangle}\rightarrow A$
corresponding to the deformation
$(\rho_A,\beta_A^{'1},\cdots,\beta_A^{'r})$ contains the ideal
$(w_1,w_2,\cdots,w_{4r-1})$. This can be done by taking
$\beta_A^{'i}=\begin{pmatrix}1+\theta(w_{4i-3})&\theta(w_{4i-2})
\\0&1+\theta(w_{4i})\end{pmatrix}^{-1}\cdot\beta_A^i$.

As we have obtained a homomorphism $T_{cyc}\rightarrow
T\rightarrow W$, the mapping
$\theta_0:\mathcal{R}_{cyc}^{\vartriangle}\rightarrow W$ sends
$w_1,\cdots,w_{4r-1}$ to zero. Denote $P$ the kernel of
$\theta_0$, we can see $w_i$'s and $(\gamma-1)$ for
$\gamma\in\Gamma_F$ are in $P$. The localization completion
$\widehat{\mathcal{R}}_{P}^{\vartriangle}:=\underleftarrow{\lim}_n(
(\mathcal{R}_{cyc}^{\vartriangle})_P/P^n)$ is a pro-Artinian local
$K$-algebra and $\rho_P:G_F\rightarrow
GL_2(\widehat{\mathcal{R}}_{P}^{\vartriangle})$ is obviously in
the set $\Phi_K(\widehat{\mathcal{R}}_{P}^{\vartriangle})$. There
is a $W[[\Gamma_F]]$-algebra homomorphism
$\vartheta:\mathcal{R}_K\rightarrow
\widehat{\mathcal{R}}_{P}^{\vartriangle}$ by the universality. To
describe the image of this map, we have the following theorem.

\begin{theorem} \label{Thm8.2}
Let $\pi$ be the projection
$\widehat{\mathcal{R}}_{P}^{\vartriangle}\rightarrow
\widehat{\mathcal{R}}_{P}^{\vartriangle}/(w_1,\cdots,w_{4r-1})$.
The composition $\pi\circ\vartheta:
\mathcal{R}_K\rightarrow\widehat{\mathcal{R}}_{P}^{\vartriangle}
/(w_1,\cdots,w_{4r-1})$ is an isomorphism.
\end{theorem}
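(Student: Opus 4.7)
The plan is to identify the target ring via the $R=T$ isomorphism of Section~\ref{sec7}, establish surjectivity of $\pi\circ\vartheta$ by Chebotarev density on the Hecke side, and deduce injectivity from a dimension count combining Hida's control theorem with a Galois cohomology bound. \emph{Target identification.} By the main theorem of Section~\ref{sec7}, $\mathcal{R}_{cyc}^{\vartriangle}\cong T_{cyc}^{\vartriangle}=T_{cyc}[[w_1,\ldots,w_{4r-1}]]/(w_{4i-1})$ as $W[[\Gamma_F]]$-algebras, so killing the framing variables yields $\mathcal{R}_{cyc}^{\vartriangle}/(w_1,\ldots,w_{4r-1})\cong T_{cyc}$. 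After localization at $P$ and $P$-adic completion, the target of $\pi\circ\vartheta$ becomes $\widehat{(T_{cyc})_Q}$ for $Q:=P\cap T_{cyc}$, the prime cutting out the Hecke eigensystem attached to $f_0$. At the base weight $k_0=(0,I)$, the character $\chi_{2,\mathbf{p}}|_{I_{\mathbf{p}}}\varepsilon_{1,\mathbf{p}}^{-1}$ is trivial on $\Gamma_{\mathbf{p}}$, so $P\cap W[[\Gamma_F]]$ equals the augmentation ideal $(\gamma_{\mathbf{p}_i}-1)_i$.

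\emph{Surjectivity.} The composition $\pi\circ\vartheta$ sends $\mathrm{tr}\,\rho_{\mathcal{R}_K}(\mathrm{Frob}_\ell)$ to $T_\ell\in\widehat{(T_{cyc})_Q}$ for each $\ell\nmid pN$. Since $T_{cyc}$ is topologically generated over $W[[\Gamma_F]]$ by the Hecke operators $T_\ell$, and since the $W[[\Gamma_F]]$-algebra structure on $\widehat{(T_{cyc})_Q}$ factors through $\mathcal{R}_K$ via the characters $\delta_{\mathcal{R}_K,\mathbf{p}}\delta_{\mathbf{p}}^{-1}$ at $\mathbf{p}|p$, Chebotarev density ensures that these images topologically generate the target. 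This yields surjectivity of $\pi\circ\vartheta$.

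\emph{Injectivity and main obstacle.} By Hida's control theorem \cite{HMF}, $T_{cyc}$ is finite free over $W[[\Gamma_F]]$; combined with the absolute irreducibility assumptions of Theorem~\ref{Thm6.1}, the local factor $(T_{cyc})_Q$ has multiplicity one, so $\widehat{(T_{cyc})_Q}\cong K[[X_1,\ldots,X_r]]$ is a regular local $K$-algebra of Krull dimension $r$. Meanwhile the tangent space of $\mathcal{R}_K$ coincides with the Greenberg Selmer group $H^1_{cyc}(G_{F,S},\mathrm{ad}^0\rho)$ of classes satisfying the ordinary local conditions at $\mathbf{p}|p$ up to cyclotomic twist and preserving the determinant; an Euler characteristic calculation via Poitou--Tate duality, together with the characteristic-zero distinguishedness $\epsilon_{\mathbf{p}}\neq\delta_{\mathbf{p}}$, bounds its $K$-dimension by $r$. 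A surjection from a complete local Noetherian $K$-algebra of tangent dimension at most $r$ onto a regular local ring of dimension $r$ is automatically an isomorphism. The main obstacle is to secure both dimension bounds at once: the regularity of $\widehat{(T_{cyc})_Q}$ requires Hida's multiplicity-one / control-theorem package for nearly ordinary eigenforms, while $\dim H^1_{cyc}\leq r$ requires a careful local analysis at primes above $p$ balancing the ordinary condition, the fixed determinant, and the cyclotomic twist freedom.
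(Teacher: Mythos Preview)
Your approach is correct in outline but takes a genuinely different route from the paper. The paper does not invoke the $R^{\vartriangle}=T^{\vartriangle}$ isomorphism of Section~\ref{sec7} at all in proving Theorem~\ref{Thm8.2}; instead it argues purely deformation-theoretically that $\widehat{\mathcal{R}}_P^{\vartriangle}/(w_1,\ldots,w_{4r-1})$ itself represents the functor $\Phi_K$. Concretely, given $\rho_A\in\Phi_K(A)$ for an Artinian local $K$-algebra $A$, the paper chooses a $G_F$-stable $W$-lattice $L\subset A^2$, sets $A_0=A\cap\mathrm{End}_W(L)$, rebuilds $\rho_{A_0}$ over the compact ring $A_0$ via pseudo-representations, selects Borel-framed bases at each $\mathbf{p}\mid p$ (here the characteristic-zero distinguishedness $\epsilon_{\mathbf{p}}\neq\delta_{\mathbf{p}}$ is used), and then normalizes the frames so the induced map $\mathcal{R}_{cyc}^{\vartriangle}\to A_0\hookrightarrow A$ kills every $w_i$. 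This exhibits the universal property directly and yields injectivity without any dimension count. Surjectivity in both proofs is the same trace-generation argument (your Chebotarev phrasing is equivalent).

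Your route---identify the target with $\widehat{(T_{cyc})_Q}$ via $R=T$, show it is regular of dimension $r$, and bound $\dim_K(\mathfrak{m}_{\mathcal{R}_K}/\mathfrak{m}_{\mathcal{R}_K}^2)\leq r$ by a Selmer-group computation---effectively collapses Theorems~\ref{Thm8.2} and~\ref{Thm8.3} into one step. Two caveats: the regularity of $\widehat{(T_{cyc})_Q}$ is not really ``multiplicity one'' but rather \'etaleness of $T_{cyc}$ over $W[[\Gamma_F]]$ near $P_0$, which the paper derives from reducedness of $T$ at square-free level (this is the substance of the proof of Theorem~\ref{Thm8.3}); and you acknowledge but do not carry out the Poitou--Tate bound $\dim_K H^1_{cyc}\leq r$, whereas the paper's lattice argument sidesteps that calculation entirely. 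What your approach buys is a single argument for both theorems; what the paper's approach buys is a clean separation---Theorem~\ref{Thm8.2} is pure deformation theory, independent of $R=T$, and only Theorem~\ref{Thm8.3} consumes the Hecke-side input.
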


\begin{proof}
It's well known that the ring $\mathcal{R}_{F,S}$ and
$\mathcal{R}_{K}$ are topologically generated by the trace of the
universal Galois representations, for example, page 244 of
\cite{HMF}. The framed ring $\mathcal{R}_{cyc}^{\vartriangle}$ and
hence the completion localization
$\widehat{\mathcal{R}}_{P}^{\vartriangle}$ are then generated by
the trace of image the universal representation, together with the
images of $w_i$'s. In particular, the quotient
$\widehat{\mathcal{R}}_{P}^{\vartriangle}/(w_1,\cdots,w_{4r-1})$
is generated only by the trace of $\rho_P$ too, and $\pi\circ
\vartheta$ is surjective. We prove the ring
$\widehat{\mathcal{R}}_{P}^{\vartriangle}/(w_1,\cdots,w_{4r-1})$
represents the functor $\Phi_F^K$ directly.

\vskip 0.3cm

For a local artinian $K$-algebra $A$ and a representation
$\rho_A\in \Phi_F^K(A)$, there is a $W$-lattice $L$ in the finite
dimensional $K$-vector space $A^2$ stable under $\rho_A(G_F)$. The
$W$-algebra $A_0=A\cap\textrm{End}_W(L)$ is compact and contains
the trace of the image of $\rho_A$. $A_0$ is a local $W$-algebra
free of finite rank over $W$ with maximal ideal $m_{A_0}=m_A\cap
A_0$. One can construct a representation
$\rho_{A_0}:G_F\rightarrow GL_2(A_0)$ by means of pseudo
representation which, after composing with the inclusion
$A_0\hookrightarrow A$, is isomorphic to $\rho_A$. For each
$\textbf{p}_i|p$, the two distinct characters
$\epsilon_{\textbf{p}}$ and $\delta_{\textbf{p}}$ having values in
$A_0$, so the local representation
$\rho_{A_0}|_{G_{\textbf{p}_i}}$ is isomorphic to a representation
into upper-triangular matrices over $GL_2(A_0)$. The reduction of
$\rho_{A_0}$ modulo $m_{A_0}$ is isomorphic to $\rho_0$. Choose a
basis $\beta_{A_0}^i$ such that
$(\rho_{A_0}|_{G_{\textbf{p}_i}},\beta_{A_0}^i)$ is in the set
$\Phi_{cyc,\textbf{p}}^{\vartriangle,\phi,s}$. The universality
gives a homomorphism
$\theta_0:\mathcal{R}_{cyc}^{\vartriangle}\rightarrow A_0$
corresponding to the pair
$(\rho_{A_0},\beta_{A_0}^1,\cdots,\beta_{A_0}^r)$. We can further
assume the $\theta_0$ maps $w_i$ to $0$ by the remark before this
theorem. The composition
$\mathcal{R}_{cyc}^{\vartriangle}\rightarrow A_0\hookrightarrow
A$, which we again denoted by $\theta_0$, factors through the
localization completion
$\widehat{\mathcal{R}}_{P}^{\vartriangle}$, and further factor
$\widehat{\mathcal{R}}_{P}^{\vartriangle}/(w_1,\cdots,w_{4r-1})$.
$\widehat{\mathcal{R}}_{P}^{\vartriangle}/(w_1,\cdots,w_{4r-1})$
must be universal, and $\pi\circ \vartheta$ is an isomorphism.
\end{proof}

\begin{theorem} \label{Thm8.3}
If we normalize the isomorphism $W[[\Gamma_F]]\cong
W[[X_\textbf{p}]]_{\mathbf{p}|p}$ sending the generators
$\gamma_{\mathbf{p}}$ to $1+X_\mathbf{p}$. Then Hida's conjecture
hold, i.e, there is an isomorphism $\mathcal{R}_K\cong
K[[t_{\mathbf{p}}]]|_{\mathbf{p}|p}$ such that
$t_{\mathbf{p}}=X_{\mathbf{p}}-p$.
\end{theorem}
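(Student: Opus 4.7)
The plan is to combine Theorem~\ref{Thm8.2} with the structural results of Section~\ref{sec7} to identify $\mathcal{R}_K$ with a localization--completion of the Hida family Hecke algebra $T_{cyc}$, and then to read off the ring structure from the freeness of $T_{cyc}$ over $W[[\Gamma_F]]$. First, Theorem~\ref{Thm8.2} gives $\mathcal{R}_K\cong\widehat{\mathcal{R}}_P^{\vartriangle}/(w_1,\dots,w_{4r-1})$. Combining this with the isomorphism $\mathcal{R}_{cyc}^{\vartriangle}\cong T_{cyc}^{\vartriangle}=T_{cyc}[[w_1,\dots,w_{4r-1}]]/(w_{4i-1})$ from Section~\ref{sec7}, and using that every $w_i$ lies in $P$ (so that quotient and localization--completion commute), yields
$$\mathcal{R}_K\;\cong\;\widehat{T_{cyc}}_{P''},\qquad P''=P\cap T_{cyc},$$
the localization and $P''$-adic completion of $T_{cyc}$ at the prime cut out by the specialization $T_{cyc}\to W$ coming from $f$. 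Since $p$ maps to the uniformizer of $W$, $p\notin P''$, so the right-hand side is automatically a complete local $K$-algebra with residue field $K$.

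Next I invoke Hida's control theorem (used already in the proof of the theorem of Section~\ref{sec7}): $T_{cyc}$ is free of finite rank $a$ over $W[[\Gamma_F]]\cong W[[X_\mathbf{p}]]_{\mathbf{p}|p}$. Setting $P_0=P''\cap W[[\Gamma_F]]$ and recalling that the $W[[\Gamma_F]]$-structure on $T_{cyc}$ comes from the inertial character $\chi_{2,\mathbf{p}}\varepsilon_{1,\mathbf{p}}^{-1}$ (under which $\gamma_\mathbf{p}\leftrightarrow 1+X_\mathbf{p}$), one reads off that $X_\mathbf{p}$ specializes at $f$ to $x_\mathbf{p}:=\gamma_\mathbf{p}^{-k_{1,\mathbf{p}}}-1\in m_W$, which is the quantity abbreviated as ``$p$'' in the statement. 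Hence $P_0=(X_\mathbf{p}-x_\mathbf{p})_{\mathbf{p}|p}$ is generated by a regular sequence of length $r$ in the regular local ring $W[[\Gamma_F]]$, with $p\notin P_0$. By the Cohen structure theorem, the complete regular local $K$-algebra $\widehat{W[[\Gamma_F]]}_{P_0}$ of dimension $r$ with residue field $K$ is isomorphic to $K[[X_\mathbf{p}-x_\mathbf{p}]]_{\mathbf{p}|p}$.

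By flat base change, $\widehat{T_{cyc}}_{P''}$ is finite free of some rank $b$ over this power series ring. By Nakayama it suffices to show the fiber $\widehat{T_{cyc}}_{P''}\otimes K$ is one-dimensional. Using $T_{cyc}/P_0 T_{cyc}\cong T_{k,\varepsilon}$ (again from the control theorem of Section~\ref{sec7}), this fiber equals the factor of $T_{k,\varepsilon}\otimes_W K$ cut out by the eigensystem of $f$. The characteristic zero Hecke algebra $T_{k,\varepsilon}\otimes K$ is commutative semisimple — a finite product of number fields, by the diagonalizability of the Hecke operators on the cuspidal space — and since $\rho$ takes values in $GL_2(W)$, the eigensystem of $f$ is $W$-valued, so the relevant factor is exactly $K$. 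Therefore $b=1$ and $\mathcal{R}_K\cong K[[t_\mathbf{p}]]_{\mathbf{p}|p}$. A short computation transporting the character $\delta_{\mathcal{R}_K,\mathbf{p}}\delta_\mathbf{p}^{-1}$ along the universal map $\vartheta$ shows $1+t_\mathbf{p}=(1+X_\mathbf{p})\gamma_\mathbf{p}^{k_{1,\mathbf{p}}}$, so $t_\mathbf{p}$ differs from $X_\mathbf{p}-x_\mathbf{p}$ by a unit in $W^\times$, confirming the identification $t_\mathbf{p}\equiv X_\mathbf{p}-p$.

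The main obstacle is the one-dimensionality of the fiber in the third step: one must exclude that $f$ lies in a component of $T_{k,\varepsilon}\otimes K$ whose residue field is a proper extension of $K$, or that several congruent eigenforms contaminate the local factor above $P_0$. Commutative semisimplicity of the characteristic zero Hecke algebra, together with the hypothesis that $\rho$ is already defined over $W$ itself rather than a ramified extension, is what secures this step.
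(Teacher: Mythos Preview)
Your argument is correct and follows the same overall architecture as the paper: reduce via Theorem~\ref{Thm8.2} and the isomorphism $\mathcal{R}_{cyc}^{\vartriangle}\cong T_{cyc}^{\vartriangle}$ to the identification $\mathcal{R}_K\cong\widehat{T}_{cyc,P_0}$, and then use the structure of $T_{cyc}$ as a finite free $W[[\Gamma_F]]$-module together with the reducedness of the weight-$k$ fiber $T\otimes_W K$ to conclude.

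The one stylistic difference is in how the last step is executed. The paper argues that $T\otimes_W K$ is reduced (because the level $N$ is square-free) hence \'etale over $K$, and then invokes a general openness result (Proposition~I.3.8 of Milne's \emph{\'Etale Cohomology}) to deduce that $T_{cyc}$ is \'etale over $W[[\Gamma_F]]$ in a neighborhood of $P_0$; the identification of completions then follows immediately from the invariance of completed local rings under \'etale morphisms. You instead argue directly with freeness plus Nakayama: $\widehat{T}_{cyc,P_0}$ is a finite free module over $\widehat{W[[\Gamma_F]]}_{P_0}\cong K[[t_{\mathbf{p}}]]$, and its fiber is the local factor of the semisimple $K$-algebra $T\otimes_W K$ cut out by the $K$-rational eigensystem of $f$, hence one-dimensional. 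Both routes rest on the same substantive input (semisimplicity of $T\otimes_W K$), so the difference is purely one of packaging; your version is slightly more hands-on, while the paper's \'etale formulation makes the passage from fiber to formal neighborhood a one-line citation.
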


\begin{proof}
By the above theorem, we have the isomorphisms $\mathcal{R}_K\cong
\widehat{\mathcal{R}}_P^{\vartriangle}/(w_1,\cdots,w_{4r-1})\cong
\widehat{T}_{cyc,P_0}$, where $P_0$ is the image of
$P/(w_1,\cdots,w_{4r-1})$ under the isomorphism
$\mathcal{R}_{cyc}^\vartriangle$ and
$\widehat{T}_{cyc,P_0}:=\underleftarrow{\lim}_n(T_{cyc,P_0}/P_0^n)$
is the localization completion of $T_{cyc}$ at $P_0$.

The Hecke ring $T_{cyc}/P_0T_{cyc}=T$, the local Hecek algebra for
forms of weight $k_0=(0,I)$ and Neben $\varepsilon_0$, is reduced,
since the level $N$ is square free. $T\otimes_WK$ is reduced and
so unramified and \'{e}tale over $K$. By proposition 3.8 of
charpter I in \cite{Mil}, $T_{cyc}$ is \'{e}tale over
$W[[\Gamma]]$ in an open neighborhood of $P_0$. The pull back of
$P_0$ to $W[[\Gamma]]$ is the prime ideal
$(X_{\textbf{p}}-p)_{\textbf{p}|p}$. The \'{e}taleness implies
that $\widehat{T}_{cyc,P_0}$ coincide to the completion
localization of $W[[\Gamma]]$ at
$(x_{\textbf{p}}-p)_{\textbf{p}|p}$ (see Theorem 4.2 in the
chapter I of \cite{Mil}), which is isomorphic to
$\mathcal{R}_K\cong K[[t_{\mathbf{p}}]]|_{\mathbf{p}|p}$ via the
map $t_{\textbf{p}}=X_{\textbf{p}}-p$.
\end{proof}

Ordered the primes $\mathbf{p}|p$ such that
$\rho|_{G_{F_{\mathbf{p}_i}}}\cong\begin{pmatrix}w&\xi_{q_i}\\0&1\end{pmatrix}
\otimes\delta_{\mathbf{p}_i}$ for $i\leq b$ and
$\epsilon_{\mathbf{p}_i}/\delta_{\mathbf{p}_i}\neq\omega$ for
$b<i\leq r$. A cocycle
$\xi_{q}:\mathrm{Gal}(\overline{F}_{\mathbf{p}}/F_{\mathbf{p}})\rightarrow
K(1)$ labelled by $q\in F_{\mathbf{p}}^\times$ is given by
$\xi_q=\underleftarrow{\lim}_n\xi_{q,n}$ for
$\xi_{q,n}(\sigma)=(q^{1/q^n})^{\sigma-1}$. Put
$Q_j=N_{F_{\mathbf{p}_j}/\mathbb{Q}_p}(q_j)$,
$F_i=F_{\textbf{p}_i}$ and
$\delta_{\mathcal{R}_K,i}=\delta_{\mathcal{R}_K,\textbf{p}_i}$.
Theorem \ref{Thm8.3} have the following corollary, by Hida's
Theorem 0.3 in \cite{H07d}.
\begin{corollary}  \label{Cor8.4}
The Greenberg $\mathcal{L}$-invariant of
$\mathrm{Ind}_F^{\mathbb{Q}}(ad^0(\rho))$ is given by
\begin{equation}
\det(\frac{\partial\delta_{\mathcal{R}_K,i}([p,F_i])}{\partial
t_j})_{i,j>b}|_{t_1=\cdots=t_r=0}\prod_{i>b}
\frac{\log_p(\gamma_i)}{\delta_{i}([p,F_i])}
\prod_{i=1}^b\frac{\log_p(Q_i)}{\mathrm{ord}_p(Q_i)}.
\end{equation}
\end{corollary}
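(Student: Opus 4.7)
The plan is to invoke Hida's Theorem 0.3 of \cite{H07d} directly, using Theorem \ref{Thm8.3} to supply its key hypothesis. Hida's theorem expresses the Greenberg $\mathcal{L}$-invariant of $\mathrm{Ind}_F^{\mathbb{Q}}(ad^0(\rho))$ in exactly the displayed form, provided one knows that the characteristic-zero deformation ring $\mathcal{R}_K$ is a power series ring over $K$ in the cyclotomic variables $t_{\mathbf{p}}$, with the initial point $\rho_0$ corresponding to $t_{\mathbf{p}} = 0$ and with the ``twist-by-$\delta$'' parametrization being the natural one. Since Theorem \ref{Thm8.3} delivers precisely the isomorphism $\mathcal{R}_K \cong K[[t_{\mathbf{p}}]]_{\mathbf{p}|p}$ under the normalization $t_{\mathbf{p}} = X_{\mathbf{p}} - p$, the corollary should follow by substitution once the two setups are matched.

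Concretely, the first step is to verify that our normalization agrees with Hida's. Under the identification $W[[\Gamma_F]] \cong W[[X_{\mathbf{p}}]]_{\mathbf{p}|p}$ via $\gamma_{\mathbf{p}} \mapsto 1 + X_{\mathbf{p}}$, the specialization $X_{\mathbf{p}} = p$ (equivalently $t_{\mathbf{p}} = 0$) recovers the arithmetic Frobenius value of the cyclotomic character on $\gamma_{\mathbf{p}}$, hence recovers the original $\delta_{\mathbf{p}}$, so that evaluating Jacobians at $t_1 = \cdots = t_r = 0$ is exactly the specialization to $\rho$. Next, I would split the index set $\{1,\dots,r\}$ into the two classes $i \leq b$ and $i > b$ as in the statement, and read off the contributions separately from Hida's general formula.

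For $i > b$ (the generic nearly ordinary case $\epsilon_{\mathbf{p}_i}/\delta_{\mathbf{p}_i} \neq \omega$), the local factor is Greenberg's standard Jacobian $\det(\partial \delta_{\mathcal{R}_K,i}([p,F_i])/\partial t_j)_{i,j > b}$ divided by $\delta_i([p,F_i])$ and rescaled by $\log_p(\gamma_i)$; this part comes from differentiating the universal character along the analytic family $\mathcal{R}_K = K[[t_{\mathbf{p}}]]$ at the classical point, and is the direct translation of Hida's formula once Theorem \ref{Thm8.3} is in place. For $i \leq b$ (the exceptional-zero case $\rho|_{G_{F_{\mathbf{p}_i}}}$ is a Tate-curve-type twist with cocycle $\xi_{q_i}$), the contribution is the Mazur--Tate--Teitelbaum factor $\log_p(Q_i)/\mathrm{ord}_p(Q_i)$ with $Q_i = N_{F_{\mathbf{p}_i}/\mathbb{Q}_p}(q_i)$; this factor arises purely from local Galois cohomology at $\mathbf{p}_i$ and does not depend on the deformation ring beyond the fact that the $i$-th coordinate is decoupled from the Jacobian block.

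The main obstacle I anticipate is a bookkeeping one rather than a conceptual one: confirming that the functor $\Phi_K$ used here (and hence the variables $t_{\mathbf{p}}$) agrees term by term with the deformation problem entering Hida's formula, in particular that the ``twist by $\delta$'' convention and the choice of generators $\gamma_{\mathbf{p}} \in \Gamma_{\mathbf{p}}$ match his, so that the $\log_p(\gamma_i)$ factor appears with the correct normalization and no extra unit ambiguity is introduced. Once this identification is made, no further computation is required: the formula is an immediate application of \cite{H07d}, Theorem 0.3, to the power series presentation of $\mathcal{R}_K$ supplied by Theorem \ref{Thm8.3}.
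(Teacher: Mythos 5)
Your proposal is exactly the paper's argument: the corollary is stated as an immediate consequence of Theorem \ref{Thm8.3} via Hida's Theorem 0.3 in \cite{H07d}, with no further proof given. Your additional remarks on matching normalizations and splitting the index set at $b$ are just the bookkeeping implicit in that citation, so the approaches coincide.
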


\end{document}